\newtheorem{thm}{Theorem}[section]
\newtheorem{cor}[thm]{\scshape{Corollary}}
\newtheorem{prop}[thm]{\scshape{Proposition}}
\newtheorem{lemma}[thm]{\scshape{Lemma}}
\newtheorem{claim}[thm]{\scshape{Claim}}
\newtheorem{question}[thm]{\scshape{Question}}
\newtheorem{remark}[thm]{\scshape{Remark}}
\newtheorem{defn}[thm]{\scshape{Definition}}
\newtheorem*{thm*}{Theorem}
\newtheorem*{cor*}{Corollary}
\theoremstyle{definition}
\def\={\cong}
\def\mc{\mathcal}
\def\mbb{\mathbb}
\newcommand{\subg}[1]{\langle #1 \rangle}
\newcommand{\enum}[2]{\begin{enumerate}[\hspace*{0.5cm}#1] #2 \end{enumerate}}
\newcommand{\Z}[0]{\mathbb{Z}}
\newcommand{\N}[0]{\mathbb{N}}
\title[Simple groups with infinite verbal width and trivial positive theory]{Simple groups with infinite verbal width and the same positive theory as free groups}
\author[M. Casals-Ruiz]{Montserrat Casals-Ruiz}
\address{Ikerbasque - Basque Foundation for Science and Matematika Saila,  UPV/EHU,  Sarriena s/n, 48940, Leioa - Bizkaia, Spain}
\email{montsecasals@gmail.com}
\author[A. Garreta]{Albert Garreta}
\address{Matematika Saila,  UPV/EHU,  Sarriena s/n, 48940, Leioa - Bizkaia, Spain}
\email{albert.garreta.fontelles@gmail.com}
\author[I. Kazachkov]{Ilya Kazachkov}
\address{Ikerbasque - Basque Foundation for Science and Matematika Saila,  UPV/EHU,  Sarriena s/n, 48940, Leioa - Bizkaia, Spain}
\email{ilya.kazachkov@gmail.com}
\author[J. de la Nuez]{Javier de la Nuez Gonz\'{a}lez}
\address{Matematika Saila,  UPV/EHU,  Sarriena s/n, 48940, Leioa - Bizkaia, Spain}
\email{jnuezgonzalez@gmail.com}
\thanks{The authors are supported by ERC grant PCG-336983, Basque Government Grant IT974-16 and Spanish Government grant MTM2017-86802-P}
\begin{document}
\tikzset{middlearrow/.style={
decoration={markings,
mark= at position 0.5 with {\arrow{#1}} ,
},
postaction={decorate}
}
}

\maketitle

\begin{abstract}
In this paper we show that there exists an uncountable family of finitely generated simple groups with the same positive theory as any non-abelian free group. In particular, these simple groups have infinite $w$-verbal width for all non-trivial words $w$.
\end{abstract}
\section{Introduction}
  
  The notion of a word is fundamental in group theory. All combinatorial group theory is based on it: groups are described in terms of their presentations, elements of groups are represented by words, all algorithmic problems in groups are formulated in terms of words etc. Words can be used to describe prominent classes of groups, such as varieties, e.g. varieties of abelian, nilpotent, solvable groups; and more generally to study word equations and subgroups generated by their solutions, hence the notion of verbal subgroup.

  More precisely, a \emph{word} $w$ is simply an element of a free group $F(x_1, \dots, x_n)$. Given a group $G$, any word $w$ naturally defines a map $\varphi_w: G^n \to G$ as follows
  $$
  (g_1, \dots, g_n) \mapsto w(g_1, \dots, g_n)
  $$
  and a subgroup $w(G)< G$ generated by the image of $\varphi_w$, which is called the \emph{verbal subgroup of $G$} defined by $w$. The \emph{variety} defined by a word $w$ is the class of groups $G$ for which $w(G)$ is trivial. We will call a word $w$ \emph{trivial} if the verbal subgroup $w(F)=F$, where $F$ is a non-abelian free group.

  By definition, every element of the verbal subgroup $w(G)$ is a product of some values of the function $w$. Hence, a natural question to ask is, what is the smallest number $n$ such that every element of $w(G)$ can be represented as a product of at most $n$ values of $w$? If such a number $n$ exists, then the word $w$ is said to be of \emph{finite width} in $G$ and  $G$ is said to be \emph{$w$-elliptic}. If $G$ is a finite group, then obviously every word $w$ has finite width. However, we may ask whether or not the width of $w$ is bounded in some class of finite groups (e.g. in all finite groups, in all finite simple groups, in all finite $p$-groups, etc). If $C$ is a class of groups, then a word $w$ has \emph{uniform width} in $C$ if there exists a function $f(d)$ such that the width of $w$ in $G \in C$ is not more than $f(d)$, where $d$ is the smallest number of generators of $G$.

  There is a deep relation between the uniform verbal width of a class of finite groups $C$ and the topology of the corresponding pro-$C$ group, since verbal subgroups of uniform finite width are closed in profinite topology. This was the key step in the proof of Serre for pro-p groups and Nikolov-Segal for the profinite case that every finite index subgroup of a finitely generated pro-p/profinite group is open, see \cite{NS}.

  Not surprisingly, many classical problems in group theory revolve around laws, verbal subgroups and their width. A prominent example is Ore's Conjecture. In 1951 Ore conjectured that every element of every finite non-abelian simple group is a commutator. Much work on this conjecture has been done over the years until its proof has been completed by Liebeck, O'Brien, Shalev and Tiep in \cite{LOST}.
  
  In light of solution of Ore's conjecture, it is natural to ask whether or not there are infinite simple groups with infinite commutator width. In this direction, Barge and Ghys \cite[Theorem 4.3]{BG} showed that there are simple groups of symplectic diffeomorphisms of $\mathbb{R}^{2n}$  which possess nontrivial homogeneous quasi-morphisms, and thus their commutator width is infinite. Further, existence of finitely generated simple groups of infinite commutator width was proved in \cite{Muranov}. Recently, Caprace and Fujiwara proved that there are finitely presented simple groups for which the space of homogeneous quasi-morphisms is infinite-dimensional, and in particular whose commutator width is infinite \cite{CF}.
  
  Generalising Ore's conjecture, one is naturally brought to study its analogues for other words. A group $G$ is said to be verbally elliptic if it is $w$-elliptic for every word $w$ (this terminology was introduced by P. Hall). In this terminology, in \cite{LOST} the authors not only establish Ore's conjecture but prove a more general result, which we can qualitatively summarized as follows.
  \begin{thm}[Liebeck, O'Brien, Shalev and Tiep, \cite{LOST}] \label{thm:lost}
  The class of finite simple groups is uniformly verbally elliptic.
  \end{thm}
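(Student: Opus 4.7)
The strategy has to be to invoke the Classification of Finite Simple Groups (CFSG) and treat the three families separately: sporadic groups, alternating groups $A_n$ for $n\geq 5$, and simple groups of Lie type. The sporadic case is automatic, since there are only finitely many such groups and each has finite width for every non-trivial word; taking the maximum gives a uniform bound. The remaining work therefore concentrates on the two infinite families, where the challenge is to obtain a bound on the $w$-width of $G$ that depends only on $w$ (and possibly on the Lie rank or on the number of generators $d$), not on $|G|$.

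\textbf{Alternating groups.} For $A_n$ I would follow the Larsen--Shalev template: first establish that for a non-trivial $w$ and $n$ sufficiently large, the image of the word map $\varphi_w\colon A_n^{k}\to A_n$ contains a suitably large conjugacy class, for instance a class of cycle type with a long cycle. Then use Fulman--Guralnick / Larsen--Shalev estimates on character ratios $|\chi(g)|/\chi(1)$ for $S_n$, derived from hook length and Vandermonde-type formulas, to show via the Frobenius formula that a bounded product of such classes already covers $A_n$. Finitely many small $n$ are then absorbed into the constant.

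\textbf{Groups of Lie type.} This is where the main work lies. The scheme is: (1) reduce the ellipticity statement to a \emph{Waring-type} statement asserting that a bounded product $\varphi_w(G^k)\cdots \varphi_w(G^k)$ equals $G$; (2) express the number of representations of a fixed $g\in G$ as such a product via the classical Frobenius formula
\[
\#\bigl\{(x_i)\in C_1\times\cdots\times C_m : x_1\cdots x_m = g\bigr\} = \frac{|C_1|\cdots|C_m|}{|G|}\sum_{\chi\in\mathrm{Irr}(G)} \frac{\chi(C_1)\cdots\chi(C_m)\overline{\chi(g)}}{\chi(1)^{m-1}};
\]
(3) dominate the non-trivial terms by Witten/Liebeck--Shalev character ratio bounds of the form $|\chi(g)|\le \chi(1)^{1-\varepsilon}$ valid for $g$ away from the center; (4) feed this into the Shalev/Larsen--Shalev/Larsen--Shalev--Tiep machinery to conclude surjectivity of $\varphi_{w}^{\ast m}$ for some $m=m(w)$. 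To apply (3) one has to know that the image of $\varphi_w$ contains regular-semisimple (or otherwise ``generic'') elements with small character ratios, which is proved separately in bounded rank using algebraic-geometric properties of word maps on semisimple algebraic groups (dimension of fibres of $\varphi_w$, Borel-type dominance), and in unbounded rank via probabilistic and combinatorial arguments on Weyl groups.

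\textbf{Main obstacle.} The hard step is (3)--(4): the Lie type case requires delicate, uniform character ratio bounds and, crucially, a proof that $\varphi_w$ hits elements to which those bounds apply. The difficulty splits along rank: for bounded rank one exploits $w$ as a dominant morphism of an algebraic group and estimates fibre dimensions, while for unbounded rank (classical groups of large rank) one must argue via Deligne--Lusztig characters and the combinatorics of Weyl group conjugacy classes. Merging the two regimes into a single uniform bound, and verifying that the small-rank and small-$n$ exceptional cases are actually $w$-elliptic (by direct, possibly computer-assisted inspection), is where the technical burden concentrates.
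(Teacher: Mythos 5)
This statement is not proved in the paper at all: it is an attribution, a ``qualitative summary'' of the Ore-conjecture paper of Liebeck--O'Brien--Shalev--Tiep together with the Waring-type results of Larsen--Shalev(--Tiep), and the paper simply cites \cite{LOST}. So there is no internal argument to compare your proposal against; the only meaningful comparison is with the published literature. Measured against that, your outline correctly identifies the architecture of the actual proofs: reduction via CFSG, absorption of the sporadic groups and of finitely many small cases into the constant, the Frobenius character-sum formula for counting representations of $g$ as a product of word values, character-ratio bounds of the form $|\chi(g)|\le\chi(1)^{1-\varepsilon}$, and the split between bounded rank (algebraic-geometric dominance of word maps) and unbounded rank (Deligne--Lusztig / combinatorial estimates). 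Two small imprecisions: the theorem as stated really combines \cite{LOST} (which treats only the commutator word) with the Larsen--Shalev--Tiep Waring results for general words, and ``uniform'' here collapses to a constant depending only on $w$, since every finite simple group is $2$-generated.

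That said, as a \emph{proof} rather than a survey, your proposal has an essential gap exactly where you locate the ``main obstacle.'' Steps (2)--(4) are not lemmas one can verify from the sketch: the assertion that $\varphi_w(G^k)$ contains a conjugacy class whose character ratios are summably small is the central theorem of Larsen--Shalev, itself requiring the full algebraic-group and probabilistic machinery you allude to, and the Frobenius formula yields nothing until that input is supplied. Likewise ``feed this into the Shalev/Larsen--Shalev--Tiep machinery'' is an appeal to several hundred pages of published work, including computer-assisted verification for small groups. So the proposal is an accurate roadmap of the known proof but does not constitute one; since the paper itself offers no proof either, this is the appropriate level at which to leave the statement, provided the attribution is made precise.
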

  
  As in the case with Ore's conjecture, in light of Theorem \ref{thm:lost}, it is natural to ask if there are simple groups which are \emph{verbally parabolic}, that is, simple groups in which every verbal subgroup has infinite width.
  
  In this paper we address this question formulated in a more general setting from the model-theoretic perspective: does there exist a finitely generated simple group with trivial positive theory?

  It is easy to see that if a group $G$ satisfies a positive sentence (in the language of groups), then so does any quotient of $G$. On the other hand, all (non-abelian) free groups satisfy the same collection of positive sentences. Hence, we say that $G$ has trivial positive theory if it satisfies the the same positive sentences as the free group.
  
  An important observation for us is that given a word $w$, the property that the $w$-verbal subgroup has finite width $k$ can be expressed by a non-trivial positive sentence.

  The main result of this paper is the following.
  
  \begin{thm}\label{t: main}
  There exist an uncountable family of  finitely generated simple groups with trivial positive theory.
  In particular, these groups have infinite $w$-verbal width for all non-trivial words $w$ and so are verbally parabolic.
  \end{thm}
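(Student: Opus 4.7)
The plan is to build the required simple groups as direct limits of iterated quotients of hyperbolic groups, in the spirit of Olshanskii-type constructions of finitely generated infinite simple groups, while carefully controlling the positive theory at each step.

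First, starting from a non-abelian free group $F$, I would construct a chain $F = G_0 \twoheadrightarrow G_1 \twoheadrightarrow G_2 \twoheadrightarrow \cdots$ of surjective homomorphisms in which each $G_i$ is non-elementary hyperbolic and each transition kills a prescribed element. Concretely, after enumerating pairs $(a_i, b_i)$ of non-trivial elements, the passage $G_i \twoheadrightarrow G_{i+1}$ would impose a relator expressing $b_i$ as a product of conjugates of $a_i$, guaranteeing that in the direct limit $G_\infty = \varinjlim G_i$ the normal closure of any non-trivial element is the whole group, so that $G_\infty$ is simple and finitely generated. The small cancellation geometry over hyperbolic groups ensures that such transitions can be carried out while keeping the group non-elementary hyperbolic. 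To produce an uncountable family, one branches the construction at each stage by choosing among exponentially many pairwise non-equivalent relators and distinguishes the resulting limits via a coarse invariant, obtaining $2^{\aleph_0}$ pairwise non-isomorphic examples by a standard counting argument.

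The heart of the proof is to choose the relators at each step so that the positive theory of $G_{i+1}$ still equals that of $F$. A positive sentence $\sigma$ takes the form $\forall \bar x \exists \bar y \, \bigvee_j \Sigma_j(\bar x, \bar y)$ with each $\Sigma_j$ a finite system of equations, and the failure of $\sigma$ in $G_i$ is witnessed by a tuple $\bar a$ for which every system $\Sigma_j(\bar a, \bar y)$ has no solution. After fixing enumerations of the positive sentences failing in $F$ together with their witnessing tuples, I would aim to establish the following key lemma at each stage: the small cancellation relators used in the passage $G_i \twoheadrightarrow G_{i+1}$ can be chosen long enough and sufficiently generic that no pre-existing witness to the failure of such a sentence is destroyed, i.e., no new solution to the relevant equation systems appears in $G_{i+1}$. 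The mechanism is that any putative new solution would yield a van Kampen diagram over $G_{i+1}$ whose combinatorial geometry is incompatible with the small cancellation hypothesis satisfied by the added relators.

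Once this is in place, triviality of the positive theory of $G_\infty$ follows by passing to the direct limit (positive sentences are preserved under unions of chains of elementary substructures defined by quantifier rank). The statement about verbal width is then immediate: for every non-trivial word $w$ and every $k \geq 1$ the sentence asserting that any product of $w$-values can be rewritten as a product of at most $k$ values of $w^{\pm 1}$ is a positive sentence that fails in $F$, hence fails in $G_\infty$, giving infinite $w$-width. The main obstacle is the key lemma on preservation of negated positive sentences under small cancellation quotients: unlike controlling a single equation, one must propagate non-solvability uniformly over the universally quantified tuple $\bar a$ and simultaneously coordinate this with the relators enforcing simplicity, which requires a delicate diagonalisation balancing both families of constraints.
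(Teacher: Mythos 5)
Your route is genuinely different from the paper's. The paper does not pass through hyperbolic quotients at all: it realises the groups as amalgams $F_3\ast_{H}F_3$ of two free groups of rank $3$ over an infinite-index subgroup $H$ encoded by permutations of $\mathbb{Z}\setminus\{0\}$ (a generalisation of Camm's simple amalgams). Triviality of the positive theory is obtained by showing that the action on the Bass--Serre tree of this splitting is weakly acylindrical and invoking the authors' earlier criterion for groups acting on trees, while simplicity is forced by a countable diagonalisation over finite partial choices of the permutations (each stage pushing the normal closure of a prescribed element down to the edge group, then to a vertex generator, then to the whole group). Your final step --- that finite $w$-width for each $k$ is expressed by a positive sentence failing in $F$ --- coincides with the paper's and is fine.

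The problem is that your ``key lemma'' is not a lemma but the entire difficulty, and as stated it is in tension with what the construction must do. Passing to a quotient can only create solutions of positive-existential conditions, and your simplicity relators create them by design: each transition makes some $b_i$ a product of conjugates of $a_i$. You must therefore show that none of these relators, nor any consequence of them at any later stage, produces a solution of any of the systems $\Sigma_j(\bar a_\sigma,\bar y)$ attached to the infinitely many positive sentences $\sigma$ failing in $F$, for fixed designated witness tuples $\bar a_\sigma$ whose images must survive as witnesses in every $G_m$. No small cancellation theorem of this strength is quoted or proved; the van Kampen heuristic addresses a single short relation, not the uniform non-solvability of arbitrary finite equation systems over fixed tuples propagated through infinitely many successive quotients, coordinated with the branching needed for $2^{\aleph_0}$ examples. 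The limit step is also misstated: the $G_i$ form a tower of proper quotients, not an elementary chain, so ``preservation under unions of chains'' does not apply; what one actually needs is that any solution over the image of $\bar a_\sigma$ in $G_\infty$ already occurs in some finite stage $G_m$, which is precisely why the unproved lemma must hold at every stage. Until that lemma is supplied, the argument does not establish triviality of the positive theory, and the construction proves at most simplicity.
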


  Our proof has two main ingredients. On the one hand, we generalise Camm's construction of an uncountable family of simple groups, \cite{Camm}. These groups have a very simple structure as amalgamated products of two free groups of rank $3$ over an infinite index subgroup. On the other hand, we show that the action of these groups on the associated Bass-Serre is weakly acylindrical and conclude from the authors' previous work, see \cite{CGN1}, that this family of groups has trivial positive theory and infinite dimensional second bounded cohomology.

  Our examples are not finitely presented, therefore, it is natural to ask the following questions.
  
  \begin{question}
  \begin{enumerate}\
  \item  Are there finitely presented simple groups with trivial positive theory?
  \item Is the positive theory of finitely presented simple groups with infinite commutator width constructed by Caprace and Fujiwara trivial?
  \item A well-known family of finitely presented simple groups was discovered by Burger and Mozes, \cite{BM}. Is the positive theory of Burger-Mozes groups trivial?
  \end{enumerate}
  \end{question}

  \newcommand{\so}[0]{\mathcal{S}}
  \newcommand{\bso}[0]{\tilde{\so}}
  \newcommand{\bih}[0]{\tilde{H}}

  \newcommand{\bv}[0]{\tilde{v}}
  \newcommand{\bw}[0]{\tilde{w}}
  \newcommand{\bx}[0]{\tilde{x}}
  \newcommand{\by}[0]{\tilde{y}}
  \newcommand{\bz}[0]{\tilde{z}}
  \newcommand{\bs}[0]{\tilde{\mathcal{S}}}
  \newcommand{\nat}[0]{\in\N}
  \newcommand{\ent}[0]{\in\Z}
  \newcommand{\zo}[0]{\Z^{*}}
  \newcommand{\izo}[0]{\in\zo}

  \newcommand{\V}[0]{\mathcal{V}}
  \newcommand{\W}[0]{\mathcal{W}}
  
  \renewcommand{\bf}[0]{\tilde{f}}
  \newcommand{\bg}[0]{\tilde{g}}
  \newcommand{\bh}[0]{\tilde{h}}

  \newcommand{\bt}[0]{\tilde{t}}
  \newcommand{\biv}[0]{\tilde{\V}}
  \newcommand{\biw}[0]{\tilde{\W}}
  
  \newcommand{\bro}[0]{\tilde{\rho}}
  \newcommand{\btheta}[0]{\tilde{\theta}}
  
  \newcommand{\pb}[0]{\mathcal{P}}    
  \newcommand{\pbw}[0]{\pb^{<\omega}} 
  \newcommand{\pis}[0]{\mathcal{I}}   
  
  \newcommand{\bda}[0]{\tilde{\delta}}
  \newcommand{\bd}[0]{\tilde{d}}
  
  \newcommand{\tuple}[0]{(\rho,\theta,\tilde{\rho},\tilde{\theta},\iota)}
  \newcommand{\otuple}[0]{\omega=(\rho,\theta,\tilde{\rho},\tilde{\theta},\iota)}
  \newcommand{\altxt}[3]{\begin{align}\tag{#1}\label{#2}\text{#3}\end{align}}
  \newcommand{\da}[0]{$\dagger$}
  \newcommand{\lbeq}[1]{\,\displaystyle_{=}^{#1}\,}

\section{An uncountable class of groups with trivial positive theory}
  
  In this section we introduce an uncountable family of groups parametrized by $\omega=(\rho, \theta, \tilde\rho, \tilde \theta, \iota)$, where $\rho, \theta, \tilde\rho, \tilde \theta$ are permutations of $\Z^*=\Z\setminus\{0\}$ and $\iota$ is a bijection between some sets. These groups are defined as amalgamated products of free groups of rank 3 over an infinite index subgroup, that is $G=F_3 \ast _{H(\rho,\theta)=\iota(\tilde{H}(\tilde \rho,\tilde\theta))}F_3$. We then show that this class of groups has trivial positive theory.
  
  \bigskip
  
  Let us start defining the class of groups. Consider two free groups $F=\subg{x,y,z}$ and $\tilde{F}=\subg{\bx,\by,\bz}$ and  two permutations $\rho$ and $\theta$ of the set $\mbb{Z}^*$.
  Let $H= H_{\rho, \theta}\leq F$ be the subgroup generated by the collection $\mathcal{S}_{\rho, \theta}=\V_{\rho,\theta }\cup\W_{\rho, \theta}$, where $\V_{\rho, \theta}=\{v_{j}^{l}\,|\,l\in\Z, j\in\zo\}$ and $\W_{\rho, \theta}=\{w_{j}^{l}\,|\,l\in\Z, j\in\zo\}$ and $v_{j}^l, w_j^{l}$ are defined by:
  \begin{align*}
  v^{l}_{j}=(x^{j}y^{-\rho(j)})^{z^{l}} \\
  w^{l}_{j}=(x^{j}zy^{-\theta(j)})^{z^{l}},
  \end{align*}
  see Figure \ref{fig:cover}. Unless strictly required, we will omit referring to $\rho$ and $\theta$ in the notation $\mc{S}_{\rho, \theta}$, and so on.

  \begin{figure}[h!]\label{fig:cover}
  \begin{tikzpicture}[scale=1.5]
  \tikzset{near start abs/.style={xshift=1cm}}

  \draw[middlearrow={latex}] (2,2) -- (2,3);
  \draw[middlearrow={latex}] (2,3) -- (2,4);
  \draw[middlearrow={latex}] (2,4) -- (2,5);
  \draw[middlearrow={latex}] (2,5) -- (2,6);
  \draw[->] (4,2) -- (4,1.3);

  \draw[color=green,middlearrow={>>}] (1,3) -- (2,3);
  \draw[color=green,middlearrow={>>}] (2,3) -- (3,3);
  \draw[color=green,middlearrow={>>}] (3,3) -- (4,3);
  \draw[color=green,middlearrow={>>}] (4,3) -- (5,3);
  \draw[color=green,middlearrow={>>}] (5,3) -- (6,3);
  
  \draw[color=green,middlearrow={>>}] (1,5) -- (2,5);
  \draw[color=green,middlearrow={>>}] (2,5) -- (3,5);
  \draw[color=green,middlearrow={>>}] (3,5) -- (4,5);
  \draw[color=green,middlearrow={>>}] (4,5) -- (5,5);
  \draw[color=green,middlearrow={>>}] (5,5) -- (6,5);

  \draw[scale=3,middlearrow={latex}] (1,1)  to[in=50,out=130,loop] (1,1);
  
  \draw[scale=3,middlearrow={latex}] (1,1.67)  to[in=50,out=130,loop] (1,1.67);
  
  \draw[scale=3,middlearrow={latex}] (1.34,0)  to[in=50,out=130,loop] (0,1.34);
  
  \draw[color=green,scale=3,middlearrow={>>}] (1.34,0)  to[in=140,out=220,loop] (0,1.34);
  
  \draw[color=red,scale=3,middlearrow={>}] (1.34,0)  to[in=-40,out=40,loop] (0,1.34);

  \draw[middlearrow={latex}] (5,3)  to[bend left=50] (6,3);
  \draw[middlearrow={latex}] (5,5)  to[bend left=50] (6,5);

  \draw[middlearrow={>}, color=red] (1,3)  to[bend left=50] (2,3);
  \draw[middlearrow={>}, color=red] (2,3)  to[bend right=50] (5,3);
  \draw[middlearrow={>}, color=red] (5,3)  to[bend right=50] (3,3);
  \draw[middlearrow={>}, color=red] (3,3)  to[bend left=50] (6,3);

  \draw[middlearrow={>}, color=red] (2,5)  to[bend right=50] (5,5);
  \draw[middlearrow={>}, color=red] (5,5)  to[bend right=50] (3,5);
  \draw[middlearrow={>}, color=red] (3,5)  to[bend left=50] (6,5);

  \draw[thick,dotted] (2,1.7)--(2,2);
  
  \filldraw (2,2) circle (1pt);
  \filldraw (2,3) circle (2pt);
  \filldraw (2,4) circle (1pt);
  \filldraw (2,5) circle (1pt);
  \filldraw (2,5) circle (1pt);
  \filldraw (2,6) circle (1pt);
  \filldraw (4,0) circle (2pt);

  \draw[thick,dotted] (2,6)--(2,6.3);
  
  \draw[thick,dotted] (0.7,3)--(1,3);
  \filldraw (1,3) circle (1pt);
  \filldraw (3,3) circle (1pt);
  \filldraw (4,3) circle (1pt);
  \filldraw (5,3) circle (1pt);
  \filldraw (6,3) circle (1pt);
  \draw[thick,dotted] (6,3)--(6.3,3);
  
  \draw[thick,dotted] (0.7,5)--(1,5);
  \filldraw (1,5) circle (1pt);
  \filldraw (3,5) circle (1pt);
  \filldraw (4,5) circle (1pt);
  \filldraw (5,5) circle (1pt);
  \filldraw (6,5) circle (1pt);
  \draw[thick,dotted] (6,5)--(6.3,5);
  
  \node (vv) at (0.7,2.8) {\color{red}{\footnotesize $\rho(-1)$}};
  \node (ww) at (3.3,2.8) {\color{red}{\footnotesize $\rho(2)$}};
  \node (zz) at (3.3,2.5) {\footnotesize $\theta(2)$};
  \node (xx) at (5.1,2.6) {\color{red}{\footnotesize $\rho(1)$}};
  \node (yy) at (6.4,3.2) {\color{red}{\footnotesize $\rho(3)$}};
  \node (tt) at (6.4,2.8) {\footnotesize $\theta(1)$};
  \node (uu) at (4,1) {\color{black}{\footnotesize $z$}};
  \node (rr) at (3,0) {\color{green}{\footnotesize $y$}};
  \node (ss) at (5,0) {\color{red}{\footnotesize $x$}};
  \node (qq) at (1.9,2.8) {\color{black}{\footnotesize $1$}};

  \end{tikzpicture}
  \caption{Cover associated to the subgroup $H< F(x,y,z)$; in order to encode the bijections $\rho$ and $\theta$, we identify $\mathbb Z$ with the vertices of the axis of y, i.e. the vertex corresponding to $y^n$ with $n$, for instance, $\rho(1)=3$ and so on.} \label{fig:1}
  \end{figure}
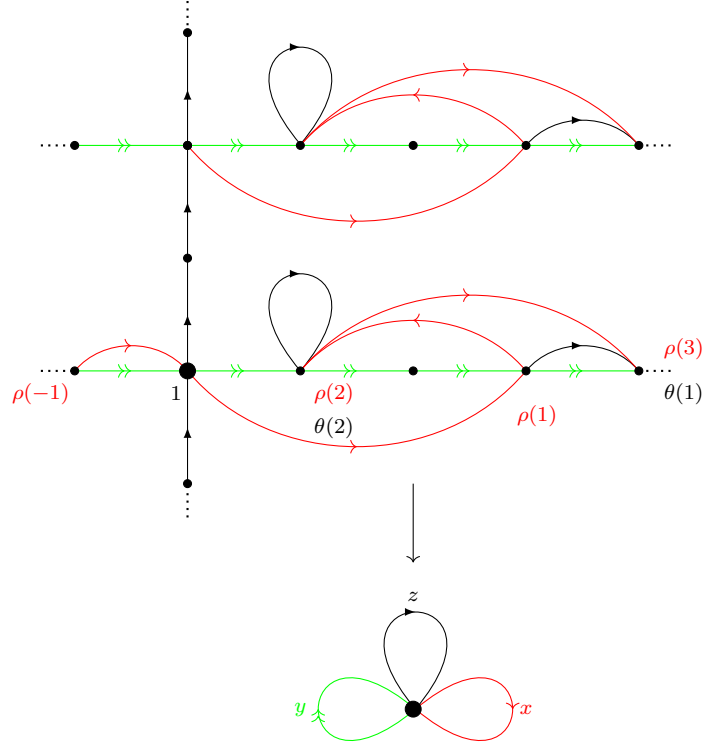

  Similarly, consider  $\tilde{H}=\tilde{H}_{\tilde{\rho}\tilde{\theta}}\leq F$ be the subgroup generated by the collection $\tilde{\mathcal{S}}=\{\tilde{v}^{l}_{j},\tilde{w}^{l}_{j}\,|\,l\in\Z, j\in\zo\}$ where
  \begin{align*}
  \bv^{l}_{j}=(\tilde{x}^{j}\tilde{y}^{-\tilde{\rho}(j)})^{\bz^{l}} \\
  \bw^{l}_{j}=(\tilde{x}^{j}\tilde{z}\tilde{y}^{-\tilde{\theta}(j)})^{\bz^{l}}
  \end{align*}
  
  For the sake of clarity we will abbreviate $v^{0}_{j}$ as $v_{j}$ and $\bv^{0}_{j}$ as $\bv_{j}$, and we will use a similar abbreviation. for $w_j^0$ and $\tilde{w}_j^0$.
  Notice that $\so$ and $\bs$ are free basis for $H$ and $\tilde H$ respectively, so any bijection $\iota$ between $\mathcal{S}$ and $\bs$ extends to an isomorphism $\iota*$ between $H$ and $\tilde{H}$.
  
  Let $G=G(\rho,\theta,\tilde{\rho},\tilde{\theta},\iota)$ be the amalgamated product of $F_{1}$ and $F_{2}$ with respect to the amalgamation induced by the isomorphism $\iota*:H \to \tilde{H}$.
  
  We will say that a minimal action of a group $G$ on a simplicial tree $T$ (without inversions) is \emph{weakly acylindrical} if it neither fixes a point nor acts 2-transitively on the boundary of $T$.
  
  \begin{thm}\label{thm:goodaction}
  The action of $G(\rho,\theta,\tilde{\rho},\tilde{\theta},\iota)$ on the tree dual to its decomposition as an amalgamated product as above is weakly acylindrical.
  \end{thm}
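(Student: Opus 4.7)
The definition of weak acylindricity requires three properties of the action of $G$ on its Bass--Serre tree $T$: minimality, absence of a global fixed point on $T$, and failure of 2-transitivity on the boundary $\partial T$. The plan is to dispose of the first two by standard Bass--Serre theory, and then to observe that the third is forced by a direct cardinality obstruction.

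For minimality, recall that the action of an amalgamated product $A\ast_C B$ on its Bass--Serre tree is minimal whenever $C$ is a proper subgroup of both $A$ and $B$; in our case $H$ is of infinite index in each of $F$ and $\tilde F$, so this applies. For the absence of a global fixed point, note that the vertex stabilizers of $T$ are the conjugates of $F$ and $\tilde F$, and these are proper subgroups of $G$: for example $\tilde F\not\subseteq F$ because in $G=F\ast_{H\cong\tilde H}\tilde F$ one has $F\cap\tilde F=H\subsetneq\tilde F$. Hence $G$ fixes no vertex, and as the action is without inversions, no midpoint of an edge either.

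For the failure of 2-transitivity, the key observation is a cardinality obstruction. Since $H$ has infinite index in both $F$ and $\tilde F$, the valence of $T$ at every vertex is infinite, equal to $[F:H]$ at $F$-type vertices and to $[\tilde F:\tilde H]$ at $\tilde F$-type vertices. Starting from any base vertex $v_0$ and independently choosing an outgoing edge at each successive vertex produces $2^{\aleph_0}$ pairwise distinct reduced rays, so $|\partial T|=2^{\aleph_0}$. On the other hand $G$ is finitely generated, hence countable, so every $G$-orbit in $\partial T$ has cardinality at most $\aleph_0<2^{\aleph_0}$. Therefore the $G$-action on $\partial T$ fails to be transitive, \emph{a fortiori} fails to be 2-transitive.

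The plan therefore rests on a single conceptual step—the cardinality obstruction to transitivity on an uncountable boundary—combined with standard structural facts about amalgamated-product actions. The only external input is that $H$ has infinite index in both free factors, which is part of the setup of the paper. No fine analysis of hyperbolic axes, fixed ends, or edge-stabilizer dynamics is required, and I do not anticipate any substantial obstacle.
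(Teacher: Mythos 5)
There is a genuine gap, and it is concentrated in your third step. The phrase ``acts 2-transitively on the boundary'' in the definition of weak acylindricity is a term of art imported from \cite{IPS} and \cite{CGN1}; it does not mean literal set-theoretic 2-transitivity of $G$ on the set of ends of $T$. Under your literal reading, the cardinality obstruction would show that \emph{every} countable group acting minimally and without a fixed point on a tree with uncountably many ends is weakly acylindrical, which would make the definition (and this theorem) vacuous and would render the Corollary false: for instance $SL_2(\mathbb{Z}[1/p])$ acts minimally and irreducibly on the Bruhat--Tits tree of $SL_2(\mathbb{Q}_p)$, whose boundary is $\mathbb{P}^1(\mathbb{Q}_p)$ and hence uncountable, yet by Burger--Monod it has no non-trivial homogeneous quasimorphisms (and, being boundedly generated, certainly does not have trivial positive theory). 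So the condition that must be refuted is an \emph{approximate} transitivity statement in the spirit of Bestvina--Fujiwara: roughly, that any sufficiently long oriented segment of the axis of one hyperbolic element can be carried by some $g\in G$ onto a segment of the axis of any other, with matching orientation. A countable group can perfectly well satisfy this, so no counting of ends or orbits can rule it out.

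Refuting that approximate condition is where all the work lies, and it requires analysing stabilizers of long segments of axes --- precisely the ``fine analysis of hyperbolic axes and edge-stabilizer dynamics'' you say is not needed. The paper's route is to take $h=z^{-1}\tilde{z}$, observe that $z$ and $\tilde{z}$ normalize $H=\tilde{H}$ (since conjugation by $z$ shifts the generators $v_j^l,w_j^l$ in the index $l$), deduce that the pointwise stabilizer of any length-$1$ subsegment of the axis of $h$ already equals the pointwise stabilizer $H$ of the whole axis (property $FS(\tfrac12)$), and then feed this weak stability into the small-cancellation machinery of \cite{CGN1} to conclude weak acylindricity. Your minimality and no-global-fixed-point observations are correct but are the easy part; as written, the proposal does not engage with the actual content of the statement.
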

  
  \begin{proof} The group  $G=G(\rho,\theta,\tilde{\rho},\tilde{\theta},\iota)$ is the amalgamated product of $F_{1}=\langle x,y,x \rangle$ and $F_{2}= \langle \tilde x, \tilde y, \tilde z\rangle$ with respect to the amalgamation induced by the isomorphism $\iota*:H \to \tilde{H}$. Consider the element $h=z^{-1}\tilde{z}$. Clearly, $h$ is hyperbolic with translation length $2$; furthermore, from the definition of the subgroups $H$ and $H'$, it follows that $z$ and $\tilde z$ (and so $h$) normalize the subgroup $H=\tilde{H}$; indeed $H$ is generated by $v_j^l$ and $w_j^l$, $l\in\Z, j\in\zo$ and by definition, $(v_j^l)^z=v_j^{l+1}$ and $(w_j^l)^z=w_j^{l+1}$, so $H^{z}=H$ in $F_{1}$. Similarly, $\tilde{H}^{\tilde{z}}=\tilde{H}$ and thus $H^{h}=H$ in $G$. Since the edge in the tree whose stabliser is $H$ is in the axis of $h$, the previous fact implies that $H$ is also the pointwise stabiliser of the axis of $h$. From this we get that $h$ satisfies the property $FS(\frac{1}{2})$: if an element $g$ fixes a segment of length $1= \frac{1}{2}tl(h)$ of the axis of $h$, then $g$ belongs to $H$ and so to the pointwise stabiliser of the axis of $h$. Then, by   \cite[Lemma 2.10]{CGN1}, the element $h$ is weakly $(\frac{1}{2}+2)$-stable, see \cite[Definition 2.9.1]{CGN1}, and so is $h^n$ for any $n\in \mathbb N \setminus\{0\}$, see \cite[Remark 4.1]{CGN1}.
  Since the action of $G$ on the Bass-Serre tree is irreducible, there exists $g$ such that $\langle g,h\rangle$ also acts irreducibly and $d(A(g), A(h))<tl(h^n)$, for some $n\in \mathbb N$. From  \cite[Corollary 4.7]{CGN1}, it follows that $G$ has weakly $N$-small cancellation $m$-tuples, for all $N,m \in \mathbb N$ and from  \cite[Proposition 7.5]{CGN1} we conclude that the action of $G$ on the Bass-Serre tree is weakly acylindrical.
  \end{proof}
  \begin{cor}
  For any $(\rho,\theta,\tilde{\rho},\tilde{\theta},\iota)$ as above the group $G(\rho,\theta,\tilde{\rho},\tilde{\theta},\iota)$ has trivial positive theory and infinite dimensional second bounded cohomology. 
  \end{cor}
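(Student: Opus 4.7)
The plan is to derive both conclusions as direct consequences of Theorem \ref{thm:goodaction} combined with the general results proved by the authors in \cite{CGN1}. By Theorem \ref{thm:goodaction}, the action of $G=G(\rho,\theta,\tilde{\rho},\tilde{\theta},\iota)$ on the Bass-Serre tree $T$ of its splitting as $F_{1}\ast_{H=\tilde{H}}F_{2}$ is minimal, without inversions, and weakly acylindrical; in particular, it neither fixes a point nor acts $2$-transitively on $\partial T$, so all the standing hypotheses under which the results of \cite{CGN1} are formulated are satisfied.

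For the first assertion, I would invoke the model-theoretic output of \cite{CGN1} to the effect that any group admitting a weakly acylindrical action on a simplicial tree satisfies the same positive sentences as a non-abelian free group; applied to the action above, this says exactly that $G$ has trivial positive theory. As noted in the introduction, for every non-trivial word $w$ and every $k\in\mathbb{N}$, the statement ``$w(G)$ has width at most $k$'' is expressible by a non-trivial positive sentence, so triviality of the positive theory forces $w$ to have infinite width in $G$, and hence $G$ is verbally parabolic.

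For the second assertion, I would similarly invoke the bounded cohomology output of \cite{CGN1}, which produces an infinite-dimensional space of homogeneous quasi-morphisms on any group acting weakly acylindrically on a simplicial tree; via Bavard-type duality this embeds into $\ker\bigl(H^{2}_{b}(G;\mathbb{R})\to H^{2}(G;\mathbb{R})\bigr)$, giving $\dim H^{2}_{b}(G;\mathbb{R})=\infty$. I expect no real obstacle here beyond quoting the correct statements from \cite{CGN1}: Theorem \ref{thm:goodaction} has already done the geometric work, verifying the $FS(\tfrac{1}{2})$ property of $h=z^{-1}\tilde{z}$ and extracting the weakly $N$-small cancellation $m$-tuples that power the results of \cite{CGN1}. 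The only care required is to match the definition of \emph{weakly acylindrical} given here with the one used there, which is essentially tautological.
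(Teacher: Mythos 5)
Your proposal is correct, and for the first assertion it coincides with the paper's proof: Theorem \ref{thm:goodaction} gives the weakly acylindrical action, and the triviality of the positive theory is then quoted directly from the main theorem of \cite{CGN1} (Theorem 7.6 there). The remark about verbal width is also the intended reading, though it belongs to Theorem \ref{t: main} rather than to this corollary.

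For the second assertion you take a slightly different route from the paper. You propose to extract an infinite-dimensional space of homogeneous quasi-morphisms from \cite{CGN1} (via the small cancellation tuples) and then pass to $H^2_b$ by Bavard-type duality. The paper instead observes that its definition of \emph{weakly acylindrical} --- a minimal action that neither fixes a point nor is $2$-transitive on the boundary --- is phrased precisely so as to place $G$ in the ``non-trivial'' branch of the trichotomy of Iozzi--Pagliantini--Sisto \cite[Theorem 1]{IPS}, which directly yields infinite-dimensional second bounded cohomology with no further work. Both arguments are sound and rest on the same underlying mechanism (actions on trees that are neither elementary nor boundary-$2$-transitive produce many quasi-morphisms); the paper's version is shorter because \cite{IPS} packages the quasi-morphism construction and the duality step into a single citable statement, whereas your version requires \cite{CGN1} to contain an explicit quasi-morphism (or $H^2_b$) statement, which is not what the paper relies on.
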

  
  \begin{proof}
  From Theorem \ref{thm:goodaction} we have that these groups act weakly acylindrically on a tree and so from Theorem 7.6 in \cite{CGN1} we obtain that they have trivial positive theory. On the other hand, from the definition of weakly acylindrical action and the trichotomoy provided in \cite[Theorem 1]{IPS}, we conclude that these groups have infinite dimensional second bounded cohomology.
  \end{proof}

\section{Simplicity of an uncountable subclass of groups}
  
  This section is devoted to the proof of the following result:
  \begin{prop}
  \label{p: main simple} There are $2^{\aleph_{0}}$ mutually non-isomorphic groups of the form $G(\rho,\theta,\tilde{\rho},\tilde{\theta},\iota)$ which are simple.
  \end{prop}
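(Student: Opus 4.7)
The proof naturally splits into two parts: identifying combinatorial conditions on $\omega=(\rho,\theta,\tilde\rho,\tilde\theta,\iota)$ under which $G_\omega$ is simple, and showing that the resulting family contains $2^{\aleph_0}$ pairwise non-isomorphic groups.

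I would handle non-isomorphism first, by a counting argument. Each $G_\omega$ is a $6$-generated countable group, and a fixed abstract countable group admits only countably many markings by $6$-tuples of elements. From such a marking the tuple $\omega$ can be reconstructed in full: the exponents in the defining words $v^{l}_{j}=(x^{j}y^{-\rho(j)})^{z^{l}}$ and their analogues recover $\rho,\theta,\tilde\rho,\tilde\theta$, while $\iota$ is read off from the amalgamation equalities holding between these elements in $G_\omega$. Hence the map sending $\omega$ to the isomorphism class of $G_\omega$ has fibers of size at most $\aleph_0$, and once we produce $2^{\aleph_0}$ simple groups $G_\omega$ we automatically obtain $2^{\aleph_0}$ mutually non-isomorphic ones.

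For simplicity, I would adapt the strategy of Camm's original construction \cite{Camm} to our setting, leveraging the Bass-Serre tree $T$ of the amalgamation. Given a non-trivial normal subgroup $N\trianglelefteq G_\omega$, split into two cases. If $N$ fixes a vertex of $T$, then up to conjugation $N\leq F$; normality forces $N\leq F\cap\tilde F=H$, and $N$ must be invariant under conjugation by all of $F$. Here I would impose on $\rho,\theta$ a genericity condition ensuring that the normal core of $H$ in $F$ is trivial, yielding $N=1$. If $N$ acts on $T$ without a global fixed point, I would use the weak acylindricity provided by Theorem \ref{thm:goodaction}, together with a minimal-syllable-length argument and conjugation by elements of both factors, to push some element of $N$ into a single factor, thereby reducing to the previous case or producing an entire free generator of $F$ inside $N$; from there, the amalgamated-product structure and the defining equations of $H$ and $\tilde H$ let one generate all of $F$ and $\tilde F$, forcing $N=G_\omega$.

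The main obstacle I expect lies in this second case: reducing syllable length in an amalgamated product over an infinite-rank subgroup is subtle, and one must impose further conditions on $\iota$, and perhaps on the interaction between $\rho,\theta$ and $\tilde\rho,\tilde\theta$, to rule out any hidden parallel structure between the two factors that could trap $N$ in a proper normal subgroup. Once those conditions are pinned down, verifying that $2^{\aleph_0}$ tuples $\omega$ satisfy them should be straightforward: the space of bijections of $\Z^*$ has cardinality $2^{\aleph_0}$ and each required condition is either generic in the Baire-category sense or at worst cuts out a $2^{\aleph_0}$-sized subset of parameters, so their intersection still has cardinality $2^{\aleph_0}$.
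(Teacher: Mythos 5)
Your counting argument for non-isomorphism is fine (and in fact more explicit than what the paper records): a countable group admits only countably many marked quotient structures on $F_6$, so $2^{\aleph_0}$ parameters yielding simple groups give $2^{\aleph_0}$ isomorphism classes, modulo checking that distinct $\omega$ give distinct kernels. The genuine gap is in the simplicity argument, which is the entire content of the paper's Section~3 and which your proposal defers rather than supplies. You split into the case where $N$ is elliptic on $T$ (handled by demanding the core of $H$ be trivial) and the case where $N$ is not, and for the latter you say that ``reducing syllable length in an amalgamated product over an infinite-rank subgroup is subtle, and one must impose further conditions on $\iota$'' --- but identifying and verifying those conditions \emph{is} the theorem. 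The paper's route is different and concrete: it never cases on ellipticity of $N$ and never uses Theorem~\ref{thm:goodaction} here. Instead, for each fixed normal form $u\,c_{l_0,j_0}\cdots\bc_{l_{2k+1},j_{2k+1}}$ it writes down an explicit \emph{finite} set of requirements on $\rho,\bro,\iota$ (Lemma~\ref{l: hyperbolic to edge}) under which conjugating a suitable element $d^{-l_0}_{M_0,M_0'}=v^{-l_0}_{M_0}(v^{-l_0}_{M_0'})^{-1}$ through the normal form lands back in $H$ nontrivially; then a second finite condition (Lemma~\ref{l: vertical}) conjugates any element of $\so_N$ into a single level-$0$ block $\subg{\V(X\times\{0\})}$; then a third (Lemma~\ref{l: stable vertical}) shows the normal closure of any nontrivial word in that block contains $\bx,\by$, whence everything via the seed $\iota(v_1)=\bw_1$. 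The proof that these countably many finite conditions are mutually consistent rests on explicit independence clauses (conditions (e)--(g), the choice $M_i=2(i+1)N$, etc.), not on genericity.

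Relatedly, your closing cardinality argument does not work as stated: an intersection of sets each of size $2^{\aleph_0}$ can be empty, and asserting that each condition is ``generic in the Baire-category sense'' is precisely the unproved consistency claim. The paper avoids this by constructing an increasing chain of \emph{finite} partial bijections $\rho_k,\bro_k,\iota_k$, each new block independent of $[-N_{k-1},N_{k-1}]$ and $(\so_{N_{k-1}},\bso_{N_{k-1}})$, so that the union $\rho_{\text{simple}},\bro_{\text{simple}},\iota_{\text{simple}}$ is a single consistent partial parameter whose complement is infinite; the $2^{\aleph_0}$ groups are then all total extensions of this one partial tuple. Until you exhibit the analogous explicit conditions for your ``hard case'' and prove they can be satisfied simultaneously, the proposal is an outline of the strategy rather than a proof.
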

  
  The strategy follows the lines introduced by Ruth Camm in \cite{Camm}. The idea is to show that local conditions on elements that all together will imply the simplicity of the group, follow from local restrictions on the set of permutations. We will show that on the one hand, the set of local element conditions imply indeed simplicity of the group, and on the other one, that the local restrictions on the permutations can be consistently satisfied and extended, and that there are in fact, uncountably many parameters $\omega=(\rho, \theta, \tilde{\rho}, \tilde{\omega}, \iota)$ that satisfy them.
  
  By \emph{edge} group of an amalgamated product of two groups $G_1$ and $G_2$ we refer to the amalgamated subgroup,  and by \emph{vertex} group we mean either $G_1$ and $G_2$.
  
  The type of local conditions that we will use are the following: the normal closure of an element of \emph{length $k$} contains an element of the edge group (see Section 3.3); the normal closure of an element of the edge group of length $k$ (in the subgroup contains the generators of the vertex group (see Section 3.5). Finally, in Section 3.6, we show that we can concatenate the local conditions consistently to obtain simplicity of the whole group.

  \smallskip
  
  Let $\pb(X,Y)$ be the collection of partial bijections between two sets $X$ and $Y$.
  We write $\pb(X)=\pb(X,X)$, and denote by $\pbw(X,Y)$ the collection of all finite (i.e.\ with finite support) partial bijections between $X$ and $Y$. We define $\pbw(X)$ similarly.
  
  To prove Proposition \ref{p: main simple} we  prove  the existence of partial maps $\rho_{\text{simple}},\tilde{\rho}_{\text{simple}}\in \pb(\zo)$ and $\iota_{\text{simple}}\in \pb(\so,\bso)$ such that:
  \begin{itemize}
  \item there are countably many elements in the complement of their domains and images
  \item for any tuple $\omega=(\rho,\theta,\tilde{\rho},\tilde{\theta},\iota)$ such that $\rho, \tilde{\rho}$ and $\iota$ extend $\rho_{\text{simple}}$, $\tilde{\rho}_{\text{simple}}$, $\iota_{\text{simple}}$, respectively,  the group $G=G(\omega)$ is simple.
  \end{itemize}

  \subsection*{Some basic notation}
    
    The proof is rather technical and strongly relies on computations. To facilitate the reading, in this section we collect the notation that we will use throughout the text.
    
    \begin{defn}\label{d: cluster_of_definitions}\
    Below $X, Y, X'$ and $Y'$ denote arbitrary sets, unless otherwise specified.
    \begin{itemize}
    \item Given $f\in\pb(X)$ we let $|f|=im(f)\cup dom(f)$.
    \item Given $f\in\pb(X,X')$ and $Y\subset X$, $Y'\subset X'$ we say that $f$ is \emph{independent} from $(Y,Y')$ if $dom(f)\cap Y=im(f)\cap Y'=\emptyset$. If $f\in\pb(X)$ then we say that $f$ is \emph{independent} from $Y$ if it is independent from $(Y,Y)$.
    \item Given $X\subset\Z^{2}$, let $\mc{V}(X)$ denotes the collection of all the generators of the form $v_{j}^l=(x^{j}y^{-\rho(j)})^{z^{l}}$,  where $(j,l)\in X$. Similarly,  $\mc{W}(X)$ denotes the collection of elements $w_j^{l}=(x^{j}zy^{-\theta(j)})^{z^{l}}$, $(j,l)\in X$.
    \item $\mc{S}(X)$ denotes $\V(X) \cup \W(X)$. We define sets $\tilde{\mc{S}}(X)$, $\tilde{\V}(X)$ and $\tilde{\W}(X)$ in a similar fashion.
    \item Whenever $X$ is omitted from the notation introduced in the items above, it will be understood that $X= \mbb{Z} \times \mbb{Z}^*$
    \item Given $Z\subset\zo$ and $l\in\Z$ let $\V^{l}(Z)=\V(Z\times\{l\})$ 
    By $\V^{l}$ we refer to $\V^{l}(Z)$.
    \item Given, $k,l\in\Z$ the expression $[k,l]$ will stand for	the set $\{k,k+1,\dots l-1,l\}$. We write $\so_{N}=\so([-N,N]\times[-N,N])$; we define $\bso_{N}$ analogously.
    \item Given $(l,j)\in\Z^{2}$ let $c_{l,j}=z^{l}x^{j}$  and $\tilde{c}_{l,j}=\bz^{l}\bx^{j}$.
    \item Given $j,M\in\Z$, let $\delta(M,j)=\rho(M+j)-\rho(j)$ and $\bda(M,j)=\bro(M+j)-\bro(j)$.
    \item Given any $(M,M',l)\in\Z^{3}$, define $d_{M,M'}^{l}=v^{l}_{M}(v^{l}_{M'})^{-1}$ and, dually, define
    $\bd_{M,M'}^{l}=\bv^{l}_{M}(\bv^{l}_{M'})^{-1}$.
    \item As defined previously $F$ and $\tilde F$ denote the free groups generated by $x,y,z$ and by $\tilde{x}, \tilde{y}, \tilde{z}$, respectively. We also defined $H=\langle \mc{S}\rangle \leq F$ and $\tilde{H} = \tilde{\mc{S}} \leq \tilde{F}$.
    \end{itemize}

    \end{defn}

    As it is customary, given $X\subset G$ we let $\subg{\subg{X}}_{G}$, or simply $\subg{\subg{X}}$, stand for the normal closure of
    $X$ in $G$.
    
    Sometimes we will identify a partial map $f$ from a set $X$ to another set $Y$ with the set $\{(x,y)\mid y=f(x),\ x \in dom(f)\}$. This allows us to use expressions such as $f\subset g$ given two partial maps $f$ and $g$.

      \newcommand{\bc}[0]{\tilde{c}}
      
  \subsection*{Preliminary definitions and calculations in $G(\omega)$.}
    
    Unless otherwise specified, throughout the rest of the paper we will use $\omega=(\rho,\theta,\tilde{\rho},\tilde{\theta},\iota)$ to denote an arbitrary tuple where $\rho, \tilde{\rho}, \theta, \tilde{\theta} $ are permutations of $\mbb{Z}$ and $\iota$ is a bijection between $\mc{S}_{\rho,\theta}$ and $\tilde{\mc{S}}_{\tilde{\rho}, \tilde{\theta}}$. We shall use the notation $\mc{S}$ and $\tilde{S}$ instead of $\mc{S}_{\rho,\theta}$ and $\tilde{\mc{S}}_{\tilde{\rho}, \tilde{\theta}}$.
    
    We start with an easy observation, which we keep for later:
    \begin{lemma}
    \label{l: xy are enough} Given any tuple $\omega=(\rho,\theta,\tilde{\rho},\tilde{\theta},\iota)$, if $ \{ (s, \iota(s))\mid s \in \mc{S}\} \cap(\V\times\biw)\neq\emptyset$  then $G(\omega)=\subg{\subg{x,y,\bx,\by}}$.
    \end{lemma}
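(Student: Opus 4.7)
The plan is to work in the quotient $Q = G(\omega)/\langle\langle x,y,\tilde{x},\tilde{y}\rangle\rangle$ and show it is trivial. The identifications $x=y=\tilde{x}=\tilde{y}=1$ in $Q$ turn the generating sets $\mc{S}$ and $\tilde{\mc{S}}$ into very simple elements: for every $(j,l)\in\Z\times\zo$, the image of $v_{j}^{l}=(x^{j}y^{-\rho(j)})^{z^{l}}$ in $Q$ is trivial, the image of $w_{j}^{l}=(x^{j}zy^{-\theta(j)})^{z^{l}}$ in $Q$ is $z^{-l}zz^{l}=z$, and symmetrically the images of $\bv_{j}^{l}$ and $\bw_{j}^{l}$ are $1$ and $\bz$ respectively. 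Thus in $Q$ every element of $\V$ maps to $1$, every element of $\W$ maps to $z$, every element of $\biv$ maps to $1$, and every element of $\biw$ maps to $\bz$.

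Now I exploit the amalgamation: for every $s\in\mc{S}$ the relation $s=\iota(s)$ holds in $G(\omega)$, hence also in $Q$. The hypothesis provides some pair $(s,\iota(s))\in \V\times\biw$; projecting this equation to $Q$ gives $1=\bz$, so $\bz=1$ in $Q$. Next, pick any element $w\in\W$ (e.g.\ $w_{1}^{0}$) and look at its image under $\iota$: it lies in $\tilde{\mc{S}}=\biv\cup\biw$. In the first case $\iota(w)$ projects to $1$ in $Q$, and in the second case it projects to $\bz=1$ in $Q$; either way the relation $w=\iota(w)$ yields $z=1$ in $Q$.

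Having killed $x,y,z,\tilde{x},\tilde{y},\tilde{z}$, we conclude that $Q$ is trivial, i.e.\ $G(\omega)=\langle\langle x,y,\tilde{x},\tilde{y}\rangle\rangle$. No real obstacle is expected: the argument is a one-step diagram-chase using the amalgamation and the explicit form of the generators of $H$ and $\tilde{H}$. The only subtlety worth double-checking is that $(v_{j}^{l})^{z^{l}}$-style conjugations indeed collapse to the claimed images once $x$ and $y$ are killed, which is immediate from the definitions in \Cref{d: cluster_of_definitions}.
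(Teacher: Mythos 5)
Your proof is correct and is essentially the paper's argument: the paper phrases it as "$\tilde z$ lies in $\langle\langle x,y,\tilde x,\tilde y\rangle\rangle$ because some $v_j^l$ is identified with some $\tilde w_{j'}^{l'}$, and then every $w_j^l=\iota(w_j^l)$ forces $z$ into the normal closure as well," which is exactly your computation in the quotient $Q$. The passage to $Q$ and the explicit images $v_j^l\mapsto 1$, $w_j^l\mapsto z$ are a clean way of writing the same diagram chase.
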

    
    \begin{proof}
    Indeed, from the fact that there is some $v_j^l \in \mc{V}$ such that $\iota(v_j^l)=\tilde{w}_{j'}^{l'}$ for some $j'\in \mbb{Z}^*, l'\in \mbb{Z}$, it follows that $\tilde z$ is in the normal closure of $x,y,\tilde x, \tilde y$. Now, since $\iota(w_j^l)$ is in the normal closure of $\tilde x, \tilde y, \tilde z$ for all $j,l$, and $\tilde z$ is in turn, in the normal closure of $x,y,\tilde x, \tilde y$, the result follows.
    \end{proof}
    
    Recall that, given $(l,j)\in\Z^{2}$ we defined $c_{l,j}=z^{l}x^{j}$  and $\tilde{c}_{l,j}=\bz^{l}\bx^{j}$.
    The following is an easy exercise:
    \begin{lemma}
    \label{l: coset enumeration}
    The set $\{c_{l,j}\}_{(j,l)\in\Z^{2}}\subset F$ is a system of representatives of
    the right cosets of $H$ in $F$. The set $\{\bc_{l,j}\}_{(j,l)\in\Z^{2}}\subset\tilde{F}$ is a system of representatives of the right cosets of $\bih$ in $\tilde{F}$.
    \end{lemma}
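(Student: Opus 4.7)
The plan is to build the Schreier coset graph of $H$ in $F$ explicitly and identify its vertex set with $\Z^{2}$, the vertex $(l,j)$ corresponding to the right coset $Hc_{l,j}$.

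First, I would define an $\{x,y,z\}$-labelled oriented graph $Y$ with vertex set $\Z^{2}$, whose edges are dictated by the coset identities forced by the relations $v_{j}^{l},w_{j}^{l}\in H$. Concretely: an $x$-edge from $(l,j)$ to $(l,j+1)$; a $z$-edge from $(l,0)$ to $(l+1,0)$ and from $(l,j)$ to $(l,\rho^{-1}(\theta(j)))$ for $j\neq 0$; and $y$-edges at each level $l$ arranged so that $(l,j)$ sits at ``$y$-coordinate'' $\rho(j)$ for $j\neq 0$ and $(l,0)$ at $y$-coordinate $0$, with each $y$-edge raising the $y$-coordinate by one. One checks directly that each vertex has exactly one incoming and one outgoing edge with each label, so $Y$ is a covering graph of the bouquet $X$ of three circles, and corresponds to a subgroup $K=\pi_{1}(Y,(0,0))\leq F$. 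Tracing $z^{l}x^{j}$ from $(0,0)$ lands at $(l,j)$, so the covering correspondence identifies vertex $(l,j)$ with $Kc_{l,j}$.

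It then remains to show $K=H$. The inclusion $H\subseteq K$ is immediate: tracing $v_{j}^{l}=z^{l}x^{j}y^{-\rho(j)}z^{-l}$ from $(0,0)$ visits $(l,0),(l,j),(l,0),(0,0)$ in turn, using that $y^{-\rho(j)}$ moves the $y$-coordinate from $\rho(j)$ back to $0$; the trace for $w_{j}^{l}$ is analogous, with the intermediate step $(l,j)\cdot z=(l,\rho^{-1}(\theta(j)))$ having $y$-coordinate $\theta(j)$. For the reverse inclusion, choose a spanning tree of $Y$ consisting of all $x$-edges together with the $z$-edges along the line $j=0$; then $K$ is freely generated by the loops associated to the non-tree edges. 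A direct bookkeeping identifies these loops, up to conjugation by elements of the tree, with the $w_{j}^{l}$ (from the non-tree $z$-edges at $j\neq 0$) and with the $v_{j}^{l}$ (from the $y$-edges), so $K\leq H$.

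With $K=H$, the vertex set of $Y$ is in bijection with $H\backslash F$ via $(l,j)\leftrightarrow Hc_{l,j}$, which is exactly the statement. The argument for $\{\tilde c_{l,j}\}$ being a transversal of $\tilde H$ in $\tilde F$ is verbatim the same with all notation decorated. The main obstacle is the bookkeeping for the non-tree $y$-edges, which requires a small case analysis tracking whether the vertex's $y$-coordinate is about to cross $0$ and how $\rho$ reindexes the wrap-around; this is routine but the most tedious part of the computation.
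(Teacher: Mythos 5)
Your argument is correct and is precisely the covering-space/Schreier-graph proof that the paper leaves as ``an easy exercise'' and depicts in Figure~1: the covering property of your graph $Y$ rests on $\rho$ and $\rho^{-1}\circ\theta$ being permutations of $\Z^{*}$, and with your spanning tree the Schreier generators do all lie in $H$ --- though note the non-tree $z$-edge at $(l,j)$ yields the product $w_{j}^{l}\bigl(v_{\rho^{-1}(\theta(j))}^{l}\bigr)^{-1}$ rather than a tree-conjugate of $w_{j}^{l}$ alone, which is all you need for $K\leq H$ but is slightly looser than your phrasing. The only cosmetic discrepancy is your expansion $(a)^{z^{l}}=z^{l}az^{-l}$, opposite to the paper's convention (it uses $(v_{j}^{l})^{z}=v_{j}^{l+1}$, i.e.\ $v_{j}^{l}=z^{-l}x^{j}y^{-\rho(j)}z^{l}$), which merely swaps $l$ and $-l$ in the indexing and changes nothing in the statement.
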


    Given $j,M\in\Z$, recall that $\delta(M,j)=\rho(M+j)-\rho(j)$ and $\bda(M,j)=\bro(M+j)-\bro(j)$.
    A simple calculation yields:
    \begin{align}
    \label{eq1} x^{M}v_{j}=v_{M+j}y^{\delta(M,j)}, \hspace{30pt}
    \bx^{M}\bv_{j}=\bv_{M+j}\by^{\bda(M,j)}
    \end{align}
    
    \begin{remark}
    \label{l: rewriting}If $\iota(v_{m})=\bv_{m'}$, then the corresponding defining relation of $G(w)$
    can be rewritten as
    $ x^{-m}\bx^{m'}=y^{-\rho(m)}\by^{\bro(m')}.$
    \end{remark}
    
    The following is just a straightforward calculation.
    
    \begin{lemma}
    \label{l: jump over v}In any group $G(\omega)$ we have:
    \begin{align*}
    (v_{M}^{-l})^{-1}c_{l,j}=z^{l}y^{-\delta(-j,M)}(v_{M-j})^{-1} \\
    (\bv_{M}^{-l})^{-1}\bc_{l,j}=\bz^{l}\by^{-\bda(-j,M)}(\bv_{M-j})^{-1}
    \end{align*}

    \end{lemma}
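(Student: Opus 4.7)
The plan is to verify the first identity by direct computation from the definitions and then observe that the second identity follows by the same argument applied to the tilde-decorated generators of $\tilde{F}$.

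First, I would unfold the conjugation notation. Since $v_M^{-l} = (x^M y^{-\rho(M)})^{z^{-l}} = z^{l}(x^M y^{-\rho(M)})z^{-l}$, taking inverses gives
\[
(v_M^{-l})^{-1} = z^{l} y^{\rho(M)} x^{-M} z^{-l}.
\]
Multiplying on the right by $c_{l,j} = z^{l} x^{j}$, the factor $z^{-l} z^{l}$ cancels, yielding
\[
(v_M^{-l})^{-1} c_{l,j} = z^{l} y^{\rho(M)} x^{j-M}.
\]

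Next, I would rewrite this in the form claimed by the lemma. By the definition $\delta(-j, M) = \rho(M-j) - \rho(M)$, so $-\delta(-j, M) = \rho(M) - \rho(M-j)$. Since $v_{M-j} = v_{M-j}^{0} = x^{M-j} y^{-\rho(M-j)}$, we have $v_{M-j}^{-1} = y^{\rho(M-j)} x^{j-M}$. Therefore
\[
z^{l} y^{-\delta(-j,M)} v_{M-j}^{-1} = z^{l} y^{\rho(M)-\rho(M-j)} \cdot y^{\rho(M-j)} x^{j-M} = z^{l} y^{\rho(M)} x^{j-M},
\]
which matches the expression computed above. This establishes the first identity as an equality in $F$ (and hence in $G(\omega)$).

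The second identity is formally identical: replacing $x,y,z,\rho$ by $\tilde x, \tilde y, \tilde z, \tilde\rho$ throughout, and using $\tilde{v}_M^{-l} = \tilde{z}^{l}(\tilde x^M \tilde y^{-\tilde\rho(M)})\tilde{z}^{-l}$, $\bc_{l,j} = \tilde z^{l}\tilde x^{j}$, and the definition of $\bda(-j,M)$, the very same string of cancellations and substitutions yields the desired equality. There is no real obstacle here; the statement is, as the authors note, a bookkeeping calculation needed in subsequent arguments, and the only subtlety is keeping track of the conjugation convention $a^b = b^{-1}ab$ when manipulating $v_M^{-l}$.
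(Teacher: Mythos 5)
Your computation is correct and is exactly the "straightforward calculation" the paper alludes to without writing out: the paper gives no proof of this lemma, and your verification (unfolding $v_M^{-l}=z^{l}x^{M}y^{-\rho(M)}z^{-l}$, cancelling against $c_{l,j}=z^{l}x^{j}$, and matching via $-\delta(-j,M)=\rho(M)-\rho(M-j)$) is the intended argument. Nothing to add.
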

    
    Given any pair $(M,M',l)\in\Z^{3}$, recall that $d_{M,M'}^{l}=v^{l}_{M}(v^{l}_{M'})^{-1}$ and dually
    $\bd_{M,M'}^{l}=\bv^{l}_{M}(\bv^{l}_{M'})^{-1}$.
    From Lemma \ref{l: jump over v} we can easily deduce:
    \begin{lemma}
    \label{l: shift}If $(M,M',l)$ and $(j,l)$ satisfy $\delta(-j,M)=\delta(-j,M')$, then
    $$(d_{M,M'}^{-l})^{c_{l,j}}=d_{M-j,M'-j}^{0}.$$
    In particular,
    $d_{M,M'}^{x^{j}}=d_{M-j,M'-j}$
    A similar property holds in the group $\bih$ with respect to the function $\bd$.
    \end{lemma}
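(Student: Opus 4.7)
The plan is a direct bookkeeping calculation building on Lemma \ref{l: jump over v} (the ``jump over $v$'' lemma), which is exactly designed to move a coset representative $c_{l,j}$ past a $v$-generator at the cost of producing a $y$-power controlled by $\delta$.

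First I would expand the conjugate using the definition of $d$:
\[
(d_{M,M'}^{-l})^{c_{l,j}}
= c_{l,j}^{-1}\,v_{M}^{-l}\,(v_{M'}^{-l})^{-1}\,c_{l,j}
= \bigl[c_{l,j}^{-1} v_{M}^{-l}\bigr]\cdot\bigl[(v_{M'}^{-l})^{-1} c_{l,j}\bigr].
\]
The second factor is handled directly by Lemma \ref{l: jump over v}:
\[
(v_{M'}^{-l})^{-1} c_{l,j} = z^{l}\,y^{-\delta(-j,M')}\,(v_{M'-j})^{-1}.
\]
The first factor is the inverse of the analogous identity for $M$ in place of $M'$, namely
\[
c_{l,j}^{-1} v_{M}^{-l} = \bigl[(v_{M}^{-l})^{-1} c_{l,j}\bigr]^{-1}
= v_{M-j}\,y^{\delta(-j,M)}\,z^{-l}.
\]

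Multiplying these two expressions, the central $z^{-l}z^{l}$ cancels, producing
\[
(d_{M,M'}^{-l})^{c_{l,j}} = v_{M-j}\,y^{\delta(-j,M)-\delta(-j,M')}\,(v_{M'-j})^{-1}.
\]
At this point the hypothesis $\delta(-j,M)=\delta(-j,M')$ is used exactly once, to collapse the middle $y$-power to the identity, giving $v_{M-j}(v_{M'-j})^{-1}=d_{M-j,M'-j}^{0}$, as required. For the ``in particular'' clause I would simply specialise to $l=0$, so that $c_{0,j}=x^{j}$ and $d_{M,M'}^{-0}=d_{M,M'}$.

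There is no real obstacle here; the entire content lies in the accurate application of Lemma \ref{l: jump over v} (in both original and inverted form) and in the cancellation of the $z$-factors. The only thing to be careful about is the sign convention: the statement conjugates $d_{M,M'}^{-l}$ (not $d_{M,M'}^{l}$), which is precisely what matches the superscript $l$ appearing in the image $z^{l}y^{-\delta(\cdot,\cdot)}(v_{\cdot})^{-1}$ of Lemma \ref{l: jump over v}, so that the two $z^{\pm l}$ terms can annihilate. The corresponding statement in $\tilde H$ with $\tilde d$, $\tilde c_{l,j}$ and $\tilde\delta$ is proved by the verbatim computation in the tilded free group.
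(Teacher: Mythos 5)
Your computation is correct and is exactly the calculation the paper intends: the paper gives no explicit proof, merely noting that the lemma is "easily deduced" from Lemma \ref{l: jump over v}, and your two applications of that lemma (one inverted), the cancellation of $z^{-l}z^{l}$, and the single use of the hypothesis $\delta(-j,M)=\delta(-j,M')$ to kill the residual $y$-power supply precisely the missing details. The specialisation $l=0$ for the "in particular" clause and the verbatim tilded argument are likewise fine.
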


    \newcommand{\jj}[0]{\tilde{j}}
    
    \begin{lemma}
    \label{l: vj conjugate}Let $a_{m}$ stand for the element $x^{-m}\bx^{m}$. Suppose we are given non-trivial integers $j,\jj,M,M'\in\zo$. Then the equality  $a_{M}^{v_{j}}=a_{M'}$ holds if all the following conditions are satisfied:
    \enum{(i)}{
    \item \label{jtoj}$\iota(v_{j})=\bv_{\jj}$
    \item \label{i(M+j)} $\iota(v_{M+j})=\bv_{M+\jj}$
    \item \label{ro(M')}$\rho(M')=\delta(M,j)$
    \item \label{bro(M')} $\bro(M')=\bda(M,\jj)$
    }
    
    \end{lemma}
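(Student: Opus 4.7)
The plan is to verify the identity by a direct computation in $G(\omega)$, using Remark~\ref{l: rewriting} as a dictionary that trades $x/\bx$-expressions for $y/\by$-expressions on the edge group.

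First I would expand $a_{M}^{v_{j}}=v_{j}^{-1}x^{-M}\bx^{M}v_{j}$ by unfolding $v_{j}=x^{j}y^{-\rho(j)}$, which gives
\[
y^{\rho(j)}\,x^{-(M+j)}\,\bx^{M}\,x^{j}\,y^{-\rho(j)}.
\]
The key move is then to insert a trivial pair $\bx^{\jj}\bx^{-\jj}$ between $\bx^{M}$ and $x^{j}$, splitting the central part into the two mixed blocks $x^{-(M+j)}\bx^{M+\jj}$ and $\bx^{-\jj}x^{j}$. Remark~\ref{l: rewriting}, applied using hypothesis (ii), replaces the first block by $y^{-\rho(M+j)}\by^{\bro(M+\jj)}$; applied using hypothesis (i), it replaces the second by $\by^{-\bro(\jj)}\,y^{\rho(j)}$.

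After these substitutions every $x$ and $\bx$ has vanished, and the outer $y^{\pm\rho(j)}$ collapse with the newly introduced copies, leaving
\[
y^{\rho(j)-\rho(M+j)}\,\by^{\bro(M+\jj)-\bro(\jj)}\;=\;y^{-\delta(M,j)}\,\by^{\bda(M,\jj)}.
\]
Hypotheses (iii) and (iv) rewrite the two exponents as $-\rho(M')$ and $\bro(M')$ respectively, and one further application of Remark~\ref{l: rewriting} in reverse turns $y^{-\rho(M')}\by^{\bro(M')}$ back into $x^{-M'}\bx^{M'}=a_{M'}$, completing the verification.

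I expect no genuine obstacle: the only real choice in the entire argument is where to place the compensating pair $\bx^{\jj}\bx^{-\jj}$, which is precisely the ``amalgamation bridge'' that channels the four numerical hypotheses into a single coherent chain of substitutions; everything else is bookkeeping of exponents. The one point of care is that the final reverse rewriting implicitly invokes the identification $\iota(v_{M'})=\bv_{M'}$, so when the lemma is later applied one should ensure this extra compatibility on $\iota$ is built into the ambient setup.
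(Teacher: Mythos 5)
Your proof is correct and takes essentially the same route as the paper's: a direct verification via the amalgamation relations of Remark~\ref{l: rewriting} (the paper packages your first two substitutions through equation~(\ref{eq1}) and the cancellation $v_{M+j}^{-1}\bv_{M+\jj}=1$, but the computation is identical, ending at the same expression $y^{-\delta(M,j)}\by^{\bda(M,\jj)}$). Your closing caveat is well spotted: converting $y^{-\rho(M')}\by^{\bro(M')}$ back into $x^{-M'}\bx^{M'}$ genuinely requires $\iota(v_{M'})=\bv_{M'}$, a hypothesis the lemma does not list and which the paper's own proof also invokes silently, so it must be verified wherever the lemma is applied (e.g.\ in Lemma~\ref{l: stable vertical}).
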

    \begin{proof}
    
    Using (\ref{jtoj}) and
    (\ref{eq1}) we get:
    \begin{align*}
    (x^{-M}\bx^{M})^{v_{j}}	=v_{j}^{-1}x^{-M}\bx^{M}v_{j}=\\
    =v_{j}^{-1}x^{-M}\bx^{M}\bv_{\jj}
    =y^{-\delta(M,j)}v_{M+j}^{-1}\bv_{M+\jj}\by^{\bda(M,\jj)}
    \end{align*}
    Using (\ref{i(M+j)}) for the first equality below, and then Remark \ref{l: rewriting} together with (\ref{ro(M')}) and (\ref{bro(M')}) we get:
    \begin{align*}
    y^{-\delta(M,j)}v_{M+j}^{-1}\bv_{M+\jj}\by^{\bda(M,\jj)}=y^{-\delta(M,j)}\by^{\bda(M,\jj)}=  		 	x^{-M'}\bx^{M'} .
    \end{align*}
    \end{proof}

  \subsection*{From $G\setminus\{1\}$ to $H\setminus\{1\}$ }

    Let $\mathcal{G}$ be the collection of sequences of the form   	$$u,c_{l_{0},j_{0}},\bc_{l_{1},j_{1}},\dots, c_{l_{2k},j_{2k}}, \bc_{l_{2k+1},j_{2k+1}}, $$ where $k\geq 0$,
    $u$ is a possibly empty element in the free group generated by  $\mc{S}$ and $(c_{i},j_{i})\in\Z^{2}\setminus\{(0,0)\}$ for all $1\leq 2k+1$.

    \begin{remark}\label{r: all_is_G}
    By Lemma \ref{l: coset enumeration} and the normal form theorem for amalgamated free products for any element $g\in G(w)$  there exists $u,c_{l_{0},j_{0}}, \dots, \bc_{l_{2k+1},j_{2k+1}}$ as above such that $g=uc_{l_{0},j_{0}} \dots \bc_{l_{2k+1},j_{2k+1}}$ in $G(\omega)$.
    \end{remark}

    The following result will be fundamental for the proof of Proposition \ref{p: main simple}.
    \begin{lemma}
    \label{l: hyperbolic to edge} For any sequence $\sigma\in\mathcal{G}$ of the form
    $u,c_{l_{0},j_{0}},\bc_{l_{1},j_{1}}\dots \bc_{l_{2k+1},j_{2k+1}}\in\mathcal{G}$ and for any $N>0$ there exist maps $\rho^{GH}=\rho^{GH}(N,\sigma)$, $ \bro^{GH}=\bro^{GH}(N,\sigma)\in\pbw(\Z^*)$ and $\iota^{GH}=\iota^{GH}(N,\sigma)\in\pbw(\so,\bso)$ such that:
    \begin{enumerate}[(i)]
    \item 	\label{indepdence} $\rho^{GH}$ and $\bro^{GH}$ are independent from $[-N,N]$ (in the sense of Item 2 of Definition \ref{d: cluster_of_definitions}) and $\iota^{GH}$ is independent from $(\so_{N},\bso_{N})$.
    \item 	\label{pushing into the edge group}If $\rho^{GH}\subset \rho$, $\bro^{GH}\subset\bro$ and $\iota^{GH}\subset\iota$  then in the group
    $G\tuple$ we have $\subg{\subg{g}}\cap H\neq\{1\}$, where  $g$ is the element represented by the product $uc_{l_{0},j_{0}}\bc_{l_{1},j_{1}}\dots c_{l_{2i},j_{2i}}\bc_{l_{2k+1},j_{2k+1}}$.
    \end{enumerate}
    \begin{proof}
    For sequences of integers $(M_{i})_{i=0}^{2k+2}$, $(M'_{i})_{i=0}^{2k+2}$ consider the following conditions:
    \enum{(a)}{
    	\item \label{a} $\iota(v_{M_{2i}-j_{2i}})=\bv^{-l_{2i+1}}_{M_{2i+1}}$
    	and $\iota(v_{M'_{2i}-j_{2i}})=\bv^{-l_{2i+1}}_{M'_{2i+1}}$
    	for $0\leq i\leq k$
    	  \item \label{b} $\iota(v^{-l_{2i+2}}_{M_{2i+2}})=\bv^{0}_{M_{2i+1}-j_{2i+1}}$
    	and
    	$\iota(v^{-l_{2i+2}}_{M'_{2i+2}})=\bv^{0}_{M'_{2i+1}-j_{2i+1}}$
    	for $0\leq i\leq k$.
    	\item \label{c}$\rho(M_{i}-j_{i})=\rho(M_{i})-j_{i}$
    	and $\rho(M'_{i}-j_{i})=\rho(M'_{i})-j_{i}$
    	for $0\leq i\leq 2k+2$
    	\item \label{d} $\bro(M_{i}-j_{i})=\bro(M_{i})-j_{i}$
    	and $\bro(M'_{i}-j_{i})=\bro(M'_{i})-j_{i}$
    	for $0\leq i\leq 2k+2$
    	\item \label{e}
    	$v_{M_0}^{-l_0}$ and $v_{M_0'}^{-l_0}$ do not appear in the alphabet of the word $u$
    	\item \label{f} $\max_{0\leq i\leq 2k+1}|j_{i}|< \min_{0\leq i<i'\leq 2k+2}|M_{i}-M_{i'}|,\min_{0\leq i<i'\leq 2k+2}|M'_{i}-M'_{i'}|$ 
    	
    	\item \label{g} $\max_{0\leq i\leq 2k+1}|j_{i}|<\min_{0\leq i,i'\leq 2k+2}\{|M_{i}-M'_{i'}|\}$. In particular, $M_i\ne M_i'$ for all $i=0, \dots, 2k+2$. 
    }
    We will need the following
    \begin{claim}\label{c: claim}
    	If $(M_{i})_{i=0}^{2k+2}$, $(M'_{i})_{i=0}^{2k+2}$, and $\rho, \tilde{\rho}, \iota$ are integer sequences and  maps such that the conditions (\ref{a})-(\ref{d}) above hold.  Then for $-1\leq i\leq k$ we have:
    	\begin{align*}
    		(d^{-l_{0}}_{M_{0},M'_{0}})^{g_{i}}=d_{M_{2i+2},M'_{2i+2}}^{-l_{2i+2}}
    	\end{align*}
    	where $g_{i}$ is defined as $ c_{l_{0},j_{0}}\bc_{l_{1},j_{1}}\dots c_{l_{2i},j_{2i}}\bc_{l_{2i+1},j_{2i+1}}$ if $i\geq 0$ and $g_{-1}$ is set to be $1$.
    \end{claim}
    Indeed, we proceed by descending induction. On the one hand, by Condition $(c)$ above we have $$\delta(-j_{2i} ,M_{2i}) = \rho(-j_{2i} + M_{2i}) - \rho(M_{2i}) = 0 = \rho(-j_{2i} + M_{2i}') - \rho(M_{2i}')= \delta(-j_{2i}, M_{2i}'),$$ and so we can use  Lemma \ref{l: shift} and  Condition (\ref{a}) to obtain:
    \begin{align*}
    	(d^{-l_{2i}}_{M_{2i},M'_{2i}})^{c_{l_{2i},j_{2i}}}=d^{0}_{M_{2i}-j_{2i},M'_{2i}-j_{2i}}=\bd^{-l_{2i+1}}_{M_{2i+1},M'_{2i+1}}.
    \end{align*}
    A similar argument involving condition (\ref{b}) and (\ref{d}) yields:
    \begin{align*}
    	(d^{-l_{2i}}_{M_{2i},M'_{2i}})^{c_{l_{2i},j_{2i}}\bc_{l_{2i+1},j_{2i+1}}}=(\bd^{-l_{2i+1}}_{M_{2i+1},M'_{2i+1}})^{\bc_{l_{2i+1},j_{2i+1}}}=\\
    	=\bd^{0}_{M_{2i+1}-j_{2i+1},M'_{2i+1}-j_{2i+1}} =d^{-l_{2i+2}}_{M_{2i+2},M'_{2i+2}}
    \end{align*}
    This completes the proof of Claim \ref{c: claim}.
    
    We shall now use Claim \ref{c: claim} in order to prove Item (ii) of Lemma \ref{l: hyperbolic to edge}. Suppose that we are given integer sequences  $(M_{i})_{i=0}^{2k+2}$, $(M'_{i})_{i=0}^{2k+2}$, as well as $\rho, \tilde{\rho}, \iota$ satisfying the conditions $(a)$ through $(g)$ above,  and consider the element $e=(d^{-l_{0}}_{M_{0},M'_{0}})^{u^{-1}}$. The  element $e^{g}e^{-1}= g^{-1} g^{e^{-1}}$ belongs to the normal closure of $g$. According to  Claim \ref{c: claim} and since $u g_k = g$, the  element $e^g e^{-1}$ is equal to $e'=d^{l_{2k+2}}_{M_{2k+2},M'_{2k+2}}e^{-1}\in H$.
    By Condition (\ref{e}) neither $v^{-l_{0}}_{M_{0}}$ nor
    $v^{-l_{0}}_{M'_{0}}$ appear in $u$ and by Condition (\ref{f}), $M_i \ne M_i'$ and so it follows that $e'$
    is non-trivial and we are done.
    
    Now, it is easy to find tuples $(M_{i})_{i}$ and $(M'_{i})_{i}$ satisfying conditions $(\ref{e}), (\ref{f})$ and $(\ref{g})$.
    For fixed values of $\rho(M_{i}),\bro(M_{i}),\rho(M'_{i}),\bro(M'_{i})$ conditions (\ref{a}) to (\ref{d})
    can be expressed in terms of $\iota,\rho,\bro$ containing some partial maps $\iota^{GH}$, $\rho^{GH}$, and $\bro^{GH}$.
    The fact that this choice can be made in a way compatible with the bijectivity of $\rho$ follows easily from conditions (\ref{f}) and (\ref{g}). As for the bijectivity of $\iota$, we need to show that the elements on the left hand-side of Conditions (\ref{a}) and (\ref{b})
    $$
    \left\{ v_{M_{2i}-j_{2i}}, v_{M'_{2i}-j_{2i}}, v^{-l_{2i+2}}_{M_{2i+2}}, v^{-l_{2i+2}}_{M'_{2i+2}} \mid i = 0, \dots, k\right\},
    $$
    as well as their images
    $$
    \left\{ \bv^{-l_{2i+1}}_{M_{2i+1}},\bv^{-l_{2i+1}}_{M'_{2i+1}}, \bv_{M_{2i+1}-j_{2i+1}}, \bv_{M'_{2i+1}-j_{2i+1}} \mid  i= 0, \dots, k\right\},
    $$
    are pairwise different.
    
    Let us show that  $v_{M_{2i'}-j_{2i'}}\ne v^{-l_{2i+2}}_{M_{2i+2}}$, for all $0\leq i,i'\leq k$.
    From condition (\ref{f}), we have that $$m_{j}=\max_{0\leq i\leq 2k+1}|j_{i}|< \min_{0\leq i<i'\leq 2k+2}|M_{i}-M_{i'}|$$ and so if $M_{2i'}-j_{2i'}=M_{2i+2}$, then
    $i'=i+1$ and $j_{2i+2}=0$. Furthermore, if
    $j_{2i+2}=0$, then $l_{2i+2}\neq 0$ and hence we have that $v_{M_{2i'}-j_{2i'}}\ne v^{-l_{2i+2}}_{M_{2i+2}}$. On the other hand, if $j_{2i+2}\neq 0$, then the inequality is immediate. Using symmetric arguments, one shows that all elements are pairwise different.
    
    In order to show that the target elements are also pairwise different, that is $\bv^{-l_{2i+1}}_{M_{2i+1}} \ne \bv_{M_{2i'+1}-j_{2i'+1}}$ for all $0\leq i,i'\leq k$, one uses analogous arguments, using Condition (\ref{g}) instead of (\ref{f}).
    
    Independence from $[-N,N]$ and  $(\so_{N},\bso_{N})$ simply follows by taking $M_{i}$, $M'_{i}$ and their images by $\rho$ and $\bro$ large enough.

    \end{proof}
    
    \end{lemma}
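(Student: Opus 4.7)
The plan is to produce a nontrivial element of $\langle\langle g\rangle\rangle\cap H$ by pushing a carefully chosen element of $H$ through the syllables of $g$ via conjugation. The elements of $H$ that behave well under this push are the edge differences $d^{-l}_{M,M'}=v^{-l}_{M}(v^{-l}_{M'})^{-1}$, because by Lemma \ref{l: shift} they transform as $(d^{-l}_{M,M'})^{c_{l,j}}=d^{0}_{M-j,M'-j}$ as soon as the ``slope matching'' condition $\delta(-j,M)=\delta(-j,M')$ holds; the tilded counterparts have analogous behaviour on the $\tilde{F}$-side.

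First I would fix $e=u\cdot d^{-l_0}_{M_0,M'_0}\cdot u^{-1}$. Since $u$ is by hypothesis a word in the free basis $\mc{S}$ of $H$, we have $e\in H$. Writing $g=u g'$ with $g'=c_{l_0,j_0}\bc_{l_1,j_1}\cdots \bc_{l_{2k+1},j_{2k+1}}$, the copies of $u$ cancel and $e^g=(d^{-l_0}_{M_0,M'_0})^{g'}$. The identity $e^g e^{-1}=g^{-1}\cdot g^{e^{-1}}$ makes it automatic that $e^g e^{-1}\in\langle\langle g\rangle\rangle$, so the task reduces to arranging that $e^g\in H$ and that $e^g e^{-1}\ne 1$.

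The heart of the argument is a descending induction along $g'$. I would choose integer sequences $(M_i)_{i=0}^{2k+2}$ and $(M'_i)_{i=0}^{2k+2}$ and impose three families of conditions on the partial maps $\rho^{GH},\tilde{\rho}^{GH},\iota^{GH}$: (i) a linear-slope condition such as $\rho(M_i-j_i)=\rho(M_i)-j_i$ (and the same for $M'_i$), which forces $\delta(-j_i,M_i)=\delta(-j_i,M'_i)$ so that Lemma \ref{l: shift} applies at the even-index syllables; (ii) the analogous condition on $\tilde{\rho}$ for the odd-index syllables; and (iii) identifications of the form $\iota(v_{M_{2i}-j_{2i}})=\bv^{-l_{2i+1}}_{M_{2i+1}}$ and $\iota(v^{-l_{2i+2}}_{M_{2i+2}})=\bv^{0}_{M_{2i+1}-j_{2i+1}}$ (and the same for $M'_i$), which re-express an $F$-side edge difference as a $\tilde{F}$-side one and back. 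A straightforward induction then yields $(d^{-l_0}_{M_0,M'_0})^{g'}=d^{-l_{2k+2}}_{M_{2k+2},M'_{2k+2}}\in H$, and therefore $e^g e^{-1}\in H$.

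I expect the main obstacle to be bookkeeping rather than conceptual. First, the imposed specifications of $\iota^{GH}$ and of $\rho^{GH},\tilde{\rho}^{GH}$ must be genuine partial bijections, which I would secure by choosing the $M_i,M'_i$ pairwise far apart and larger than $\max_i|j_i|$, so that all the $v$'s and $\bv$'s named above are mutually distinct. Second, independence from $[-N,N]$ and $(\so_N,\bso_N)$ forces these $M_i,M'_i$ and their $\rho$- and $\tilde{\rho}$-images to lie outside $[-N,N]$, which is always achievable since $N$ and the data $\sigma$ are finite. Finally, nontriviality of $e^g e^{-1}$ reduces to a freeness argument in $H$: if $M_0\ne M'_0$ and neither $v^{-l_0}_{M_0}$ nor $v^{-l_0}_{M'_0}$ occurs in $u$, then the ``tail'' $u\cdot (v^{-l_0}_{M'_0})(v^{-l_0}_{M_0})^{-1}\cdot u^{-1}$ cannot be cancelled by the ``head'' $d^{-l_{2k+2}}_{M_{2k+2},M'_{2k+2}}$ in the free group on $\mc{S}$. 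The real work is in checking that the combinatorial constraints on $(M_i),(M'_i)$ together with the shifts by $j_i$ are jointly satisfiable, which amounts to choosing these parameters generically large and spread out.
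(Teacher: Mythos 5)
Your proposal is correct and follows essentially the same route as the paper: conjugating the element $e=(d^{-l_{0}}_{M_{0},M'_{0}})^{u^{-1}}$ through the syllables of $g$ via Lemma \ref{l: shift}, imposing the same slope conditions on $\rho,\tilde{\rho}$ and the same $\iota$-identifications, forming $e^{g}e^{-1}\in\subg{\subg{g}}\cap H$, and securing nontriviality, consistency and independence by choosing the $M_{i},M'_{i}$ generically large and spread out. No substantive differences from the paper's argument.
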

    
  \subsection*{From $H\setminus\{1\}$ to $V^{0}\setminus\{1\}$ }

    \begin{lemma}
    \label{l: vertical}For any $N>0$ there exist $\iota^{vert}=\iota^{vert}(N)\in\pbw(\so,\bso)$ and $X=X^{vert}(N)\subset\Z$
    such that:
    \enum{(i)}{
    \item 	\label{indepdence2} $\iota^{vert}$ is independent from $(\so_{N},\bso_{N})$.
    \item \label{tamenesss}$\{(v_{j},\bv_{j})\}_{j\in X}\subset\iota^{vert}$
    \item 	\label{pushing into vertical}If  $\iota^{vert}\subset\iota$, then there exists  $h\in G(\omega)$ such that
    $h^{-1}\so_{N}h\subseteq \langle \mc{V}(X\times\{0\}) \rangle$.

    }
    \end{lemma}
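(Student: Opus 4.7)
The basic idea is to use that both $z \in F$ and $\bz \in \tilde F$ normalize the edge group in their respective vertex groups (shifting the $z$-superscript of a generator of $\so$ or $\bso$ by $\pm 1$), so the conjugator $h = z^a \bz^{-b}$ carries out a ``shuttle'' across the amalgamation: first $z^a$ shifts the $z$-superscript of each generator of $\so_N$ out of $[-N,N]$ into a region where $\iota^{vert}$ is free to dictate the value of $\iota$; then passage through $\iota$ lands on a preset target $\bv_k^b$ on the $\tilde F$-side; and finally $\bz^{-b}$ normalizes the superscript back to $0$, leaving $\bv_k = v_k$ in $G(\omega)$.

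To implement this, I fix $a = 2N+1$ and $b = 1$, enumerate the elements of $\so_N$, and assign to each $s \in \so_N$ a distinct fresh integer $k_s \in \zo \setminus [-N,N]$. I then take $X = \{k_s : s \in \so_N\}$ and define the finite partial bijection $\iota^{vert}$ by declaring $s^{z^a} \mapsto \bv_{k_s}^b$ for every $s \in \so_N$, together with $v_k \mapsto \bv_k$ for every $k \in X$. Since $a > 2N$, each $s^{z^a}$ has $z$-superscript in $[N+1, 3N+1]$ and so lies outside $\so_N$; the $k_s$'s are outside $[-N,N]$, so the image elements $\bv_{k_s}^b$ and $\bv_k$ lie outside $\bso_N$; and bijectivity is immediate given that the $k_s$'s are distinct and $b \neq 0$ (so in particular $\bv_{k_s}^b\ne \bv_{k_s}$). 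This yields (i), and (ii) holds by construction.

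For (iii), fix any $\iota \supset \iota^{vert}$ and take $h = z^a \bz^{-b}$. For a generator $s \in \so_N$ we have $h^{-1} s h = (s^{z^a})^{\bz^{-b}}$; the inner expression $s^{z^a}$ is $v_j^{l+a}$ or $w_j^{l+a}$, and by the definition of $\iota^{vert}$ it equals $\iota(s^{z^a}) = \bv_{k_s}^b$ in $G(\omega)$; finally, using the identity $(\bv_m^n)^{\bz^c} = \bv_m^{n+c}$, we get $(\bv_{k_s}^b)^{\bz^{-b}} = \bv_{k_s}^0 = \bv_{k_s}$, which in turn equals $v_{k_s}$ in $G(\omega)$ because $\iota(v_{k_s}) = \bv_{k_s}$. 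Hence $h^{-1} s h = v_{k_s} \in \V(X \times \{0\})$ for every generator $s \in \so_N$, proving $h^{-1} \so_N h \subseteq \langle \V(X \times \{0\}) \rangle$.

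I expect no substantive obstacle: the key insight is that one-sided conjugation cannot work (conjugation by $z^a$ keeps $w$'s as $w$'s and cannot collapse to $\V(X\times\{0\})$, while $\bz^{-b}$ alone would require pre-setting $\iota$ on $\so_N$ itself, which is forbidden by (i)), but the two-sided composite uses $\iota^{vert}$ as a bridge, with the $z^a$-step serving precisely to translate the generators to indices where $\iota^{vert}$ is permitted to act. The remaining work is combinatorial bookkeeping — choosing the fresh indices $k_s$, avoiding collisions in the domain and image of $\iota^{vert}$, and ensuring disjointness from $\so_N$ and $\bso_N$ — which is routine given the choices of $a$ and $b$.
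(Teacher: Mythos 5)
Your proof is correct and follows essentially the same route as the paper's: conjugate by an element of the form $z^{a}\bz^{-1}$ so that the shifted generators land outside $\so_{N}$, where $\iota^{vert}$ is free to send them to elements $\bv_{k}^{1}$ that $\bz^{-1}$ then normalizes to $\bv_{k}=v_{k}$ with $k\in X$. (Your choice $a=2N+1$ is in fact slightly safer than the paper's $z^{N+1}$, whose shifted superscripts $l+N+1$ can still fall in $[-N,N]$.)
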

    \begin{proof}
    Take for example $X=[2N,2N+4N(2N+1)-1]$. Define $B_N$ to be $([-N,N]\setminus\{0\})\times[-N,N]$ and choose any bijection $\mu:B_{N} \times\{0,1\}\to X$  and let:
    \begin{align*}
    \iota^{vert}=\{(v_{j}^{l+N+1},\tilde{v}_{\mu(j,l,0)}^{1}),(w_{j}^{l+N+1}, \tilde{v}_{\mu(j,l,1)}^{1})\,|\,(j,l)\in B_{N}\}\cup\{(v_{j},\bv_{j})\,|\,j\in X\}.
    \end{align*}
    
    The relation $\iota^{vert}$ is clearly a partial bijection independent from $(\so_{N},\bso_{N})$. As $h$ we take the element $z^{N+1}\bz^{-1}$. We content ourselves with checking condition (\ref{pushing into vertical})
    on an element of the form $v_{j}^{l}$ with $(j,l)\in B_{N}$. Assume $\iota^{vert}\subset\iota$. Then:
    \begin{align*}
    (v_{j}^{l})^{h}=(v_{j}^{l})^{z^{N+1}\bz^{-1}}=(v_{j}^{l+N+1})^{\bz^{-1}}=(\bv^{1}_{\mu(j,l,0)})^{\bz^{-1}}=\bv_{\mu(j,l,0)}=v_{\mu(j,l,0)}.
    \end{align*}
    \end{proof}
  \subsection*{Killing $\subg{\bx,\by}$ }
    
    Given $X\subseteq \mbb{Z}^*$, let $\mathcal{H}(X)$ be the collection of non-empty sequences of the form
    $u=v_{j_{0}}^{\epsilon_{0}}v_{j_{1}}^{\epsilon_{1}}\dots v_{j_{k}}^{\epsilon_{k}}$ ($v_{j}$ is regarded as a letter here) where $\epsilon_{k}\in\{1,-1\}$, $j_{k}\in X$, and   where $k\geq 1$, $j_{i}=j_{i+1}$ implies $\epsilon_{i}=\epsilon_{i+1}$. If $X =\mbb{Z}^*$ then we denote $\mc{H}(X)$ simply by $\mc{H}$.  
    Let us restate an argument from \cite{Camm}.
    \begin{lemma}
    \label{l: stable vertical} Suppose we are given $N>0$ and
    $u=v_{j_{0}}^{\epsilon_{0}}v_{j_{1}}^{\epsilon_{1}}\dots v_{j_{k}}^{\epsilon_{k}}\in\mathcal{H}$ such that $\max_{0\leq i\leq k}|j_{i}|<N$.
    
    Then there exist $\rho^{x\bx}=\rho^{x\bx}(N,u)$, $\bro^{x\bx}=\bro^{x\bx}(N,u)\in\pb
    (\Z)$, $\iota^{x\bx}=\iota^{x\bx}(N,u)\in\pb(\so,\bso)$ such that:
    \enum{(I)}{
    \item \label{independence3} The maps $\rho^{x\bx}$ and $\bro^{x\bx}$ are independent from $[-N,N]$, and $\iota^{x\bx}$ is independent from $(\so_{N},\bso_{N})$.
    \item If $Id_{\{j_{i}\}_{i=0}^{k}}\cup \rho^{x\bx}\subset\rho$, $Id_{\{j_{i}\}_{i=0}^{k}} \cup\bro^{x\bx}\subset\bro$,
    and $\{(v_{j_{i}},\bv_{j_{i}})\}_{1\leq i\leq k}\cup\iota^{x\bx}\subset\iota$, then in $G(\omega)$ we have $\bx, \by\in\subg{\subg{u}}$.
    }
    
    \end{lemma}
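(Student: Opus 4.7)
Adapting Camm's argument \cite{Camm}, the plan is to apply Lemma~\ref{l: vj conjugate} iteratively along the letters of $u$. I pick a sequence of integers $M_0,M_1,\dots,M_{k+1}$ outside $[-N,N]$ with pairwise differences much larger than $\max_i|j_i|$, and then define the partial maps $\rho^{x\bx}$, $\bro^{x\bx}$ and $\iota^{x\bx}$ on the finitely many indices $M_i$, $M_i+j_i$ and the corresponding elements $v_{M_i+j_i}$ so that at each step $i$ the four conditions of Lemma~\ref{l: vj conjugate} are satisfied with $j=\tilde j = j_i$ (using the hypotheses $\iota(v_{j_i})=\bv_{j_i}$ and $\rho(j_i)=\bro(j_i)=j_i$). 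Iterating yields $a_{M_0}^{u}=a_{M_{k+1}}$ in $G(\omega)$, and so
\[
a_{M_{k+1}}a_{M_0}^{-1}\;=\; u^{-1}\cdot u^{a_{M_0}^{-1}}\;\in\;\subg{\subg{u}}.
\]

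Next, I include in $\iota^{x\bx}$ the pairs $(v_m,\bv_m)$ for $m\in\{M_0,M_{k+1},n\}$ with $n=M_{k+1}-M_0$, with chosen values of $\rho(m)$ and $\bro(m)$. These amalgamation identities force $x^{-m}\bx^m=y^{-\rho(m)}\by^{\bro(m)}$ in $G(\omega)$, so the element $a_{M_{k+1}}a_{M_0}^{-1}$ admits two distinct expressions; conjugating by appropriate powers of $x$, $y$, $\bx$, $\by$ produces several elements of $\subg{\subg{u}}$, including $\bx^{n}x^{-n}$, $\by^{n'}y^{-n''}$ and $\by^{\bro(n)}y^{-\rho(n)}$, where $n'=\bro(M_{k+1})-\bro(M_0)$ and $n''=\rho(M_{k+1})-\rho(M_0)$. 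A product of the last two reduces, under the linearity condition $\rho(n)=n''$, to a pure power $\by^{n'-\bro(n)}$, and prescribing $\bro(n)=n'-1$ then gives $\by\in\subg{\subg{u}}$. A dual choice of parameters in a parallel chain ($\bro(n)=n'$ and $\rho(n)=n''-1$) yields $y\in\subg{\subg{u}}$. Finally, once $y,\by\in\subg{\subg{u}}$, additional non-diagonal amalgamation identifications $\iota(v_M)=\bv_{M'}$ for carefully chosen pairs $(M,M')$ with $|M|,|M'|>N$ and $M\neq M'$ force relations of the form $x^{M}\equiv\bx^{M'}$ modulo $\subg{\subg{u}}$; combined with $\bx^{n}x^{-n}\in\subg{\subg{u}}$ for various $n$, these give $\bx\in\subg{\subg{u}}$.

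The main obstacle is the combinatorial bookkeeping: each chain and each auxiliary identification introduces only finitely many new values of $\rho, \bro, \iota$, but they must all be pairwise distinct (for the bijectivity of $\rho, \bro, \iota$), outside $[-N,N]$ (for the independence conditions) and disjoint from the values already fixed by the hypotheses at the $j_i$'s. Because we have an unbounded supply of large integers, compatibility can always be arranged, but the verification is technical and closely mirrors the corresponding step in Camm's paper.
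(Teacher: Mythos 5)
Your proposal is correct and follows essentially the same route as the paper: both iterate Lemma \ref{l: vj conjugate} along the letters of $u$ with a widely spaced chain of large integers $M_i$ to place an element of the form $x^{-n}\bx^{n}$ in $\subg{\subg{u}}$, and then impose finitely many further values of $\rho$, $\bro$, $\iota$ at large indices (the paper's clauses at $C$, $2C$, $3C$, $4C$ with small offsets) to extract first $\by$ and then $\bx$. The only differences are cosmetic --- the paper reaches $\bx$ without first deriving $y\in\subg{\subg{u}}$ and settles the consistency and independence checks with the explicit choice $M_i=2(i+1)N$, $C=4kN$ --- so the bookkeeping you defer is precisely the verification the paper carries out.
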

    \begin{proof}
    \newcommand{\ros}[0]{\rho^{x\bx}}
    \newcommand{\bros}[0]{\bro^{x\bx}}
    \newcommand{\ios}[0]{\iota^{x\bx}}
    Let $M_{i}=2(i+1)N$ ($i=0,\dots, k$), $C=4kN$. We let $\ros,\bros,\ios$ be defined by the conditions:
    \enum{(i)}{
    \item $\rho(M_{i})=\bro(M_{i})=M_{i}$ for all $0\leq i \leq k$,
    \item \label{multiple1}$\rho(C)=\bro(C)=C$, $\rho(C+1)=C+1$, $\bro(C+1)=C+2$,
    \item \label{multiple2} $\rho(2C)=\bro(2C)=2C$, $\rho(2C+1)=2C+1$, $\bro(2C+1)=2C+3$
    \item \label{multiple11}$\rho(3C)=3C$, $\rho(3C+2)=3C+1$,
    \item \label{multiple21} $\rho(4C)=4C$, $\rho(4C+3)=4C+1$
    \item \label{multiple stable}$\iota(v_{k})=\bv_{k}$ for $k\in\{C,C+1,2C,2C+1\}$.
    
    \item If $\epsilon_{i}=1$, then:
    \enum{(a)}{
    	\item $\rho(M_{i}+j_{i})=M_{i+1}+j_{i}$
    	\item $\bro(M_{i}+j_{i})=M_{i+1}+j_{i}$
    	\item  $\iota(v_{M_{i}+j_{i}})=\bv_{M_{i}+j_{i}}$
    }
    \item If $\epsilon_{i}=-1$, then:
    \enum{(a)}{
    	\item $\rho(M_{i+1}+j_{i})=M_{i}+j_{i}$
    	\item $\bro(M_{i+1}+j_{i})=M_{i}+j_{i}$
    	\item $\iota(v_{M_{i+1}+j_{i}})=\bv_{M_{i}+j_{i}}$  			}
    }
    Notice that this implies (using $\rho(j_{i})=\bro(j_{i})=j_{i}$ as a consequence of Item II in the statement of the lemma) that
    $\delta(M_{i},j_{i})=M_{i+1}=\rho(M_{i+1})=\bro(M_{i+1})$ in case $\epsilon_{i}=1$ and $\delta(M_{i+1},j_{i})=M_{i}=\rho(M_{i})=\bro(M_{i})$ in case $\epsilon_{i}=-1$.

    By virtue of Lemma \ref{l: vj conjugate} together with  $\rho(j_{i})=\bro(j_{i})=j_{i}$  we have $(x^{-M_{i}}\bx^{M_{i}})^{v_{j_{i}}^{\epsilon_{i}}}=x^{-M_{i+1}}\bx^{M_{i+1}}$ 
    for all $0\leq i\leq k-1$, from which it follows that $(x^{-M_{0}}\bx^{M_{0}})^{u}=x^{-M_{k}}\bx^{M_{k}}.$ And thus $x^{-m}\bx^m= x^{-(M_k-M_0)}\bx^{M_k-M_0}=[x^{-M_0} \bx^{M_0}, u]^{\bx^{-M_0}}\in\subg{\subg{u}}$, where $m=2kN$ since $M_i=2(i+1)N$.
    
    It follows that
    $x^{C}=\bx^{C}$ mod $\subg{\subg{u}}$.
    Now, the equalities $\rho(C)=\bro(C)=C$ and
    $\iota(v_{C})=\bv_{C}$ imply that also $y^{C}=\by^{C}$ mod $\langle \langle u \rangle \rangle$, since $x^C y^{-\rho(C)} = \tilde{x}^C \tilde{y}^{-\tilde{\rho}(C)}$. Applying both to the equality $x^{C+1}y^{-C-1}=\bx^{C+1}\by^{-C-2}$   yields
    $xy^{-1}=\bx\by^{-2}$ mod $\subg{\subg{u}}$. An analogous argument using (\ref{multiple2}) yields
    $xy^{-1}=\bx\by^{-3}$ mod $\subg{\subg{u}}$. It follows that $\by\in \subg{\subg{u}}$. In a similar fashion clauses
    (\ref{multiple11}) and (\ref{multiple21}) yield $\bx\in \subg{\subg{u}}$.
    
    Define now $M'_{i}=M_{i}+j_{i}$,
    $M''_{i}=M_{i+1}+j_{i}$, and let $\mathcal{M}=\{M_{i}\}_{0\leq i\leq k+1}$
    $\mathcal{M}'=\{M'_{i}\}_{0\leq i\leq k}$
    and $\mathcal{M}''=\{M''_{i}\}_{0\leq i\leq k}$.
    From the facts that $\max_{0\leq i\leq k}|j_{i}| < N$ (by hypothesis)   and $j_{i}\neq 0$ for all $i$  it follows that  the integers in $\mc{M}$ are pairwise distinct, and similarly for $\mc{M}'$ and $\mc{M}''$.  Moreover $\mathcal{M}\cap(\mathcal{M}'\cup\mathcal{M}'')=\emptyset$.
    
    On the other hand, the only way we could possibly have $M'_{i'}=M''_{i}$ is if $i'=i+1$, $\epsilon_{i}=1$, $\epsilon_{i'}=-1$
    and $j_{i}=j_{i'}$, which cannot occur by assumption. Since $C$ is bigger than any of the $M_{i}$ or $M'_{i}$, it easily follows that the list of conditions above is consistent, giving partial bijections $\ros,\bros,\ios$ as in the statement.
    That the latter are independent from $[-N,N]$ and $(\so_{N},\bso_{N})$ is clear from the definition of $M_{i}$.
    \end{proof}
    
    \begin{remark}
    The condition  $\{(v_{j_{i}},\bv_{j_{i}})\}_{1\leq i\leq k}\cup\iota^{x\bx}\subset\iota$ in Item II of Lemma \ref{l: stable vertical}   could be replaced by the weaker requirement that $\iota(v_{j_{i}})\in\biv$
    and the condition that $\rho(j_{i})=\bro(j_{i})=j_{i}$ could be removed altogether, at the expense of complicating the calculations.
    \end{remark}

  \subsection*{Proof of Proposition \ref{p: main simple}}
    
    We construct a sequence of positive integers  $N_{0}<N_{1}<\dots <N_{k}<N_{k+1}\dots$ and an increasing
    sequences of partial bijections
    $$(\rho_{k})_{k\geq 0},(\bro_{k})_{k\geq 0}\subset\pb([-N_{k},N_{k}]),\quad (\iota_{k})_{k\geq 0}\subset\pb(\so_{N_{k}},\bso_{N_{k}}),$$
    a sequence of disjoint sets
    $(X_{k})_{k\geq 1}$ and a sequence of elements $(u^{*}_{k})_{k\geq 1}\subset\mathcal{H}$ as follows.
    
    To begin with, let $\rho_{0}=\bro_{0}=\emptyset$, $\iota_{0}=\{(v_{1},\bw_{1}),(w_{1},\bv_{1})\}$ and $N_{0}=2$.
    Let $(s_{n})_{n\geq 0}$ be an enumeration of the set $\mathcal{G}$ and $(w_{n})_{n\geq 0}$ be an enumeration of $\mathcal{H}$.

    Assume now that $k>1$ and  $\rho_{k-1},\bro_{k-1},\iota_{k-1}$, $X_{k-1}$, $N_{k-1}$ have already been constructed.
    
    Consider the partial maps given by Lemma  \ref{l: hyperbolic to edge}, $\rho^{GH}=\rho^{GH}(N_{k-1},s_{k-1})$,  $\bro^{GH}=\bro^{GH}(N_{k-1},s_{k-1})$, and $\iota^{GH}=\iota^{GH}(N_{k-1},s_{k-1})$.
    Let $N'>N_{k-1}$  be such that
    $\rho^{GH},\bro^{GH}\in\pbw([-N',N'])$ and $\iota^{GH}\in\pbw(\so_{N'},\bso_{N'})$.
    Furthermore, let $\iota^{vert}(N')$ and
    $X_{k}:=X^{vert}(N')$ be the output from Lemma \ref{l: vertical} and set
    $\bro^{vert}=Id_{X_{k}}$.
    Let $u^{*}_{k}=w_{l}$ where $l$ is the minimum index $l$ such that
    $$w_{l}\in \cup_{j\leq k}\mathcal{H}(X_{j})\setminus\left\{u_{j}^*\mid 0 \leq j< k\right\},$$ 
    Finally, consider the maps $\rho^{x\bx} =\rho^{x\bx}(N',u^{*}_{k})$, $\rho^{x\bx}=\rho^{x\bx}(N',u^{*}_{k})$, $\iota^{x\bx}=\iota^{x\bx}(N',u^{*}_{k})$ given by Lemma \ref{l: stable vertical} and let
    \begin{align*}
    \rho_{k}= &\rho_{k-1}\cup\rho^{GH}\cup\rho^{vert}\cup\rho^{x\tilde{x}} \\
    \bro_{k}= &\bro_{k-1}\cup\bro^{GH}\cup\bro^{vert}\cup\bro^{x\tilde{x}} \\
    \iota_{k}= &\iota_{k-1}\cup\iota^{GH}\cup\iota^{vert}\cup\iota^{x\tilde{x}}
    \end{align*}
    Choose $N_{k}>N'$ such that $\rho_{k},\bro_{k}\in\pb([-N_{k},N_{k}])$ and $\iota_{k}\in\pb(\so_{N_{k}},\bso_{N_{k}})$ hold. 
    
    To conclude, let 
    \begin{align*}
    \rho_{\text{simple}}= &\bigcup_{k\geq 0}\rho_{k},\;\;\;\bro_{\text{simple}}= \bigcup_{k\geq 0}\bro_{k},\;\;\; \iota_{\text{simple}}= \bigcup_{k>0}\iota_{k}
    \end{align*}
    
    \begin{claim}
    $\rho_{\textit{simple}}\subset\rho,\,\,\bro_{\textit{simple}}\subset\bro,\,\,\iota_{\textit{simple}}\subset\iota$ implies that $G\tuple$ is simple.
    \end{claim}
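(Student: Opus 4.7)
The plan is to fix an arbitrary non-trivial $g\in G(\omega)$ and show $\langle\langle g\rangle\rangle=G$, by chaining Lemmas \ref{l: hyperbolic to edge}, \ref{l: vertical} and \ref{l: stable vertical} along the enumeration built into the construction of $\rho_{\text{simple}}$, $\bro_{\text{simple}}$ and $\iota_{\text{simple}}$. The three stages should produce a member of $\langle\langle g\rangle\rangle$ in progressively smaller loci: first in the edge group $H$, then in $\langle\V(X_{k'}\times\{0\})\rangle$ for an appropriate $k'$, and finally the generators $\bx,\by$ themselves.

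First, by Remark \ref{r: all_is_G} I would represent $g$ in normal form as some $s_k\in\mc{G}$. Step $k+1$ of the construction incorporated the partial maps of Lemma \ref{l: hyperbolic to edge} applied to $(N_k,s_k)$ into $\rho_{k+1},\bro_{k+1},\iota_{k+1}$, hence into $\rho,\bro,\iota$ by hypothesis; consequently Lemma \ref{l: hyperbolic to edge}(ii) produces a non-trivial $h_1\in\langle\langle g\rangle\rangle\cap H$. Next, pick $N$ with $h_1\in\langle\so_N\rangle$ and $k'$ with $N_{k'-1}>N$; step $k'$ added $\iota^{vert}(N')\subset\iota$ and defined $X_{k'}=X^{vert}(N')$, so Lemma \ref{l: vertical}(iii) furnishes $h\in G$ with $h_2:=h^{-1}h_1 h\in\langle\V(X_{k'}\times\{0\})\rangle\cap\langle\langle g\rangle\rangle\setminus\{1\}$. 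Writing $h_2$ as a reduced word in the free system $\{v_j:j\in X_{k'}\}$ realises it as an element of $\mc{H}(X_{k'})$, equal to some $w_n$ in the enumeration of $\mc{H}$; a short induction on $n$ shows that the greedy minimum-index rule defining $u^*_k$ eventually forces $h_2=u^*_{k''}$ for some $k''\ge k'$.

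At step $k''$ the construction appended $\rho^{x\bx},\bro^{x\bx},\iota^{x\bx}$ from Lemma \ref{l: stable vertical} applied to $u^*_{k''}$. The remaining auxiliary requirements of that lemma --- namely $\rho(j_i)=\bro(j_i)=j_i$ and $\iota(v_{j_i})=\bv_{j_i}$ for the letters $v_{j_i}$ appearing in $u^*_{k''}$ --- are already supplied by the vertical stage at the earlier (unique) step $k_0\le k''$ where the set $X_{k_0}\ni j_i$ was introduced, via $\rho^{vert},\bro^{vert}=\mathrm{Id}_{X_{k_0}}$ and item (ii) of Lemma \ref{l: vertical}. Lemma \ref{l: stable vertical}(II) then yields $\bx,\by\in\langle\langle u^*_{k''}\rangle\rangle\subset\langle\langle g\rangle\rangle$.

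The main obstacle comes in the closing step, because Lemma \ref{l: xy are enough} only gives $G=\langle\langle x,y,\bx,\by\rangle\rangle$ and we still need $x,y\in\langle\langle g\rangle\rangle$. The natural remedy is to rerun the three-stage argument in its tilde-dual form, using the manifest $F\leftrightarrow\tilde F$ symmetry to obtain dual versions of Lemmas \ref{l: vertical} and \ref{l: stable vertical} and to interleave the construction so that at each step it also processes enumerations of $\tilde{\mc{G}}$ and $\tilde{\mc{H}}$ and includes the corresponding dual partial maps; the independence clauses from $[-N,N]$ and $(\so_N,\bso_N)$ present in each lemma make this stitching consistent, since the integers invoked at later stages can always be chosen outside the ranges previously used. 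Once $\{x,y,\bx,\by\}\subset\langle\langle g\rangle\rangle$ is established, the hypothesis of Lemma \ref{l: xy are enough} is met (because $\iota_0\subset\iota$ meets $\V\times\biw$), and the lemma then delivers $G=\langle\langle x,y,\bx,\by\rangle\rangle\subset\langle\langle g\rangle\rangle$. Hence $\langle\langle g\rangle\rangle=G$ and $G(\omega)$ is simple.
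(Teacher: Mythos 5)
Your first three stages coincide with the paper's argument (normal form via Remark \ref{r: all_is_G}, then Lemma \ref{l: hyperbolic to edge} to land in $H$, then Lemma \ref{l: vertical} to conjugate into some $\langle \mc{V}(X_{k'}\times\{0\})\rangle$, then the enumeration to hit some $u^*_{k''}$ and Lemma \ref{l: stable vertical} to get $\bx,\by\in\subg{\subg{g}}$), and your observation that the auxiliary hypotheses $\rho(j_i)=\bro(j_i)=j_i$, $\iota(v_{j_i})=\bv_{j_i}$ are supplied by the vertical stage is exactly what the construction is designed for. The divergence is in the last step, and there your proposal goes wrong in a way that matters. The claim is about the \emph{specific} partial maps $\rho_{\text{simple}},\bro_{\text{simple}},\iota_{\text{simple}}$ built in the preceding construction; re-interleaving the construction with tilde-dual enumerations of $\tilde{\mc{G}}$ and $\tilde{\mc{H}}$ proves (at best) a different statement about a different family of maps, and the consistency of that stitching is only asserted, not checked. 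So as a proof of the stated claim it has a gap precisely where you flagged "the main obstacle."

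Moreover, the detour is unnecessary: the data already frozen into $\iota^{x\bx}$ and $\rho^{x\bx}$ forces $x,y\in\subg{\subg{g}}$ once $\bx,\by\in\subg{\subg{g}}$, which is how the paper closes. Concretely, writing $N=\subg{\subg{g}}$: for consecutive $j\in X_{k'}$ the amalgamation relations $v_j=\bv_j$ with $\rho(j)=\bro(j)=j$ give $x^jy^{-j}\in N\cdot\langle\bx,\by\rangle=N$, and dividing consecutive ones yields $xy^{-1}\in N$; then a relation such as $v_{3C+2}=\bv_{3C+2}$ with $\rho(3C+2)=3C+1$ gives $x^{3C+2}y^{-(3C+1)}\in N$, which modulo $x\equiv y$ forces $y\in N$ and hence $x\in N$. (Equivalently, the congruences $x^C\equiv\bx^C$, $y^C\equiv\by^C$, $xy^{-1}\equiv\bx\by^{-2}$, etc., established inside the proof of Lemma \ref{l: stable vertical} modulo $\subg{\subg{u^*_{k''}}}$ express $x$ and $y$ as words in $\bx,\by$ modulo $\subg{\subg{u^*_{k''}}}$.) With $x,y,\bx,\by\in N$ the pairs $(v_1,\bw_1),(w_1,\bv_1)\in\iota_0$ give $z,\bz\in N$ directly, so you do not even need to route through Lemma \ref{l: xy are enough}. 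I would also note a small slip earlier: an element of $\langle\mc{V}(X_{k'}\times\{0\})\rangle$ of length one in the basis $\{v_j\}_{j\in X_{k'}}$ is not literally in $\mc{H}(X_{k'})$ as defined (which requires $k\geq 1$), so a word or two is needed to reduce to length at least two before invoking the enumeration.
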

    \begin{proof}
    Let $G=G\tuple$ as above and $g$ a non-trivial element of $G$ and let $N_g=\subg{\subg{g}}$. Let us say that $g$ is represented by $s_{k_{0}} \in \mc{G}$ for some $k_{0}>0$ (see Remark \ref{r: all_is_G}).
    
    Lemma \ref{l: hyperbolic to edge} implies the existence of some element $h\in (H\cap N_g)\setminus\{1\}$.
    Now pick some $k_{1}>0$ such that  $h\in\subg{\so_{N_{k_{1}}}}$. By Lemma \ref{l: vertical} we must have $h'\in N_g$ for some
    $h'\in\mathcal{H}(X_{k_{1}})\setminus\{1\}$.
    
    Now, there must be some $k_{2}\geq k_{1}$ such that
    $h'=u^{*}_{k_{2}}$
    Since $Id_{X_{k_{1}}}\subset\rho,\bro$,
    and $(v_{j},\bv_{j})_{j\in X_{k_{1}}}\subset\iota$, by the third step in the construction of $\rho_{k_{2}},\bro_{k_{2}},\iota_{k_2}$ and Lemma \ref{l: stable vertical} we have
    $\bx,\by\in N_g$. It follows that also $x,y\in N$. The fact that $(v_1,\tilde{w}_{1}), (w, \tilde{v}) \in\iota_{\text{simple}}$ then implies that $z,\bz\in N$.
    \end{proof}

\end{document}